\numberwithin{equation}{section}
\newcommand{\be}{\begin{equation}}
\newcommand{\ee}{\end{equation}}
\newcommand{\ben}{\begin{eqnarray*}}
\newcommand{\enn}{\end{eqnarray*}}
\newcommand{\beq}{\begin{equation}}
\newcommand{\eeq}{\end{equation}}
\newcommand{\bea} {\begin{array}{rl}}
\newcommand{\eea} {\end{array}}
\newcommand{\bepa}{\left\{ \begin{array}{l}}
\newcommand{\eepa} {\end{array}\right.}
\newcommand{\Rmnum}[1]{\expandafter\@slowromancap\romannumeral #1@}
\newtheorem{theorem}{\textbf Theorem}[section]
\newtheorem{lemma}{\textbf Lemma}[section]
\newtheorem{remark}{\textbf Remark}[section]
\newtheorem{proposition}[theorem]{Proposition}
\def\dis{\displaystyle}
\begin{document}

\author{Huicong Li}
\address{School of Mathematics, Sun Yat-Sen University, Guangzhou, 510275, Guangdong Province, China.}
\email{hli7@tulane.edu}

\author{Rui Peng}
\address{School of Mathematics and Statistics, Jiangsu Normal University, Xuzhou, 221116, Jiangsu Province, China}
\email{pengrui\_seu@163.com}

\author{Zhi-an Wang}
\address{Department of Applied Mathematics, Hong Kong Polytechnic University,
Hung Hom, Kowloon, Hong Kong}
\email{mawza@polyu.edu.hk}
\title
[On a Diffusive SIS Epidemic Model]
{On a Diffusive SIS Epidemic Model with Mass Action Mechanism and birth-death effect: analysis, simulations and comparison with other mechanisms$^*$}\thanks{\footnotesize$^*$H. Li was partially supported by NSF of China (Nos. 11601170, 11671143 and 11671144), and
R. Peng was partially supported by NSF of China (Nos. 11671175 and 11571200), the Priority Academic Program Development of Jiangsu Higher Education Institutions,
Top-notch Academic Programs Project of Jiangsu Higher Education Institutions (No. PPZY2015A013) and Qing Lan Project of Jiangsu Province, and
Z.-A. Wang was partially supported by the Hong Kong RGC GRF grant (No. PolyU 153032/15P)}

\begin{abstract} In the present paper, we are concerned
with an SIS epidemic reaction-diffusion model governed
by mass action infection mechanism and linear birth-death growth with no flux boundary condition.
By performing qualitative analysis, we study the stability of the disease-free equilibrium, uniform persistence property in terms of the basic reproduction number and the global stability of the endemic equilibrium in homogeneous environment, and investigate the
asymptotic profile of endemic equilibria (when exist) in heterogeneous environment as one of the movement rate of the susceptible and infected populations is small. Our results, together with those in previous works on three other closely related modeling systems,
suggest that the factors such as infection mechanism, variation of total population and population movement play vital but subtle roles in the transmission dynamics of diseases and hence provide useful insights into the strategies designed for disease control and prevention.
\end{abstract}

\keywords{SIS epidemic reaction-diffusion model, mass action infection mechanism, basic reproduction number,
  endemic equilibria, small diffusion, asymptotic profile, persistence/extinction}

\subjclass[2000]{35K57, 35J57, 35B40, 92D25}
\date{}
\maketitle

\section{Introduction}
The mathematical study of infectious diseases can be traced back to the classic work of Kermack and McKendrick \cite{KM26} in 1927.
In \cite{KM26}, the authors adopted the {\it mass action infection mechanism} (also called {\it density-dependent infection mechanism}) to study a deterministic SIR (susceptible-infected-recovered)
epidemic model, meaning that the infection (incidence) rate is proportional to the number of encounters between susceptible and infected individuals; mathematically,
such infection rate is characterized by the bilinear function $\beta SI$, where $\beta>0$ is the disease transmission rate and $S(t)$ and $I(t)$ represent the density of susceptible and infected
populations respectively. The most significant achievement made in \cite{KM26} is perhaps the epidemic threshold result that the density of
susceptible individuals must exceed a critical value in order for the epidemic outbreak to occur. Due to the seminal importance of
the Kermack-McKendrick theory to the field of theoretical epidemiology, their works were republished in 1991; see \cite{KM911, KM912, KM913}.

Employing the same infection mechanism and instead considering an SIS (susceptible-infected-susceptible) model,
one is led to the following ODE system (see, for instance, \cite{Martcheva}):
 \begin{equation}
\left\{ \begin{array}{ll}
S' = -\beta SI + \gamma I, &t>0,\\
I' = \beta SI - \gamma I,&t>0,
\end{array}\right.
\label{intro1}
\end{equation}
where $\gamma>0$ is the disease recovery rate, together with initial data fulfilling $S(0)+I(0)=N>0$ and $I(0)>0$.
As one of the simplest models in mathematical epidemiology, \eqref{intro1} still demonstrates the threshold result as
Kermack and McKendrick \cite{KM26} observed. In fact, it is clear that
$S(t) + I(t) = N$ for all $t\geq 0$,
and hence \eqref{intro1} can be reduced to the following logistic type equation:
$$I' = \beta I \left[\left(N-\frac{\gamma}{\beta}\right) -I \right].$$
Simple analysis shows that if $N\leq \gamma/\beta$, then $I(t) \to 0$ and in turn $S(t) = N - I(t) \to N$ as
$t \to \infty$, while if $N > \gamma/\beta$, it holds $I(t) \to N - \gamma/\beta >0$, and $S(t) \to \gamma/\beta>0$ as $t\to \infty$.
Defining the {\it basic reproduction number} $\mathcal R_0= N\beta / \gamma$, then the {\it disease-free equilibrium} (DFE) $(N,0)$ is globally attractive if $\mathcal R_0 \leq 1$, while the {\it endemic equilibrium} (EE) $(\gamma / \beta, N - \gamma/\beta)$ is globally attractive if $\mathcal R_0>1$.
We also refer interested readers to the review paper \cite{Het2} for various ODE models describing infectious diseases.

Nowadays it is widely recognized that spatial spread of an infection is closely related to the heterogeneity
of the environment and the spatial-temporal movement of the hosts. This is well supported by numerous research on diseases including
malaria \cite{Lou-Zhao1, Lou-Zhao2}, rabies \cite{Kallen1, Kallen2, Murray1}, dengue fever \cite{Takahashi}, West Nile virus \cite{Lewis, Lin}, hantavirus \cite{Abramson1, Abramson2}, Asian longhorned beetle \cite{GL,GZ}, etc; see \cite{Ruan} and references therein.
A popular way to incorporate spatial movement of hosts into epidemic
models is to assume host random movements, leading to coupled reaction-diffusion equations.
Taking into account spatial diffusion and environmental heterogeneity, we obtain the PDE version of \eqref{intro1}:
\begin{equation}
\left\{ \begin{array}{llll}
S_t - d_S \Delta S = -\beta(x) SI +\gamma(x) I, &x\in\Omega,\, t>0,\\
I_t - d_I \Delta I = \beta(x) SI -\gamma(x) I,& x\in\Omega,\,t>0,\\
\frac{\partial S}{\partial \nu}= \frac{\partial I}{\partial \nu} = 0,&x\in\partial\Omega, \, t>0,
\end{array}\right.
\label{intro2}
\end{equation}
where the spatial domain $\Omega \subset \mathbb R^m~{\rm(}m\geq 1{\rm)}$ is bounded and has smooth boundary $\partial\Omega$; positive constants $d_S$ and $d_I$ represent the diffusion rate of susceptible and infected individuals respectively; $\beta(x)$ and $\gamma(x)$ are
positive H\"{o}lder continuous functions on $\overline \Omega$ accounting for the disease transmission rate and recovery rate respectively; the Neumann boundary condition means that no population flux crosses the boundary $\partial\Omega.$ For this model, Deng and Wu \cite{Deng-Wu} studied the global dynamics and existence of EE, while \cite{LiBo1,Wu-Zou} investigated the asymptotic profile of EE (when exists) as the diffusion rate of susceptible or infected population is small or large, which consequently suggests interesting implication in terms of epidemiology; see the last section of our paper for further discussion.

System \eqref{intro2} does not take into consideration birth/death effect of susceptible or infected individuals and thus the total population is conserved in the sense that
$$\int_\Omega \left[S(x,t)+I(x,t)\right]dx = \int_{\Omega} \left[S_0(x) + I_0(x) \right]dx =: N ,\quad \forall t\geq 0.$$
However, it is quite natural to consider the situation that susceptible individuals are subject to a recruitment (source) term modeling their birth and death rate,
especially a linear one \cite{AM, Het2}. Therefore, in this paper we are motivated to study the following reaction-diffusion epidemic system with varying total population and environmental heterogeneity:
 \begin{equation}
 \left\{ \begin{array}{llll}
  S_t-d_S\Delta S=\Lambda(x)-S-\beta(x)SI+\gamma(x)I,&x\in\Omega,\,t>0,\\
  I_t-d_I\Delta I=\beta(x)SI-\left[\gamma(x)+\mu (x)\right] I,&x\in\Omega,\,t>0,\\
  \frac{\partial S}{\partial \nu}=\frac{\partial I}{\partial \nu}=0,&x\in\partial\Omega,\,t>0,\\
  S(x,0)=S_0(x)\geq0,\,I(x,0)= I_0(x)\geq,\not\equiv 0, &x\in\Omega.
 \end{array}\right.
 \label{model}
 \end{equation}
The recruitment term $\Lambda(x)-S$ represents that the susceptible population is subject to linear growth and
$\mu(x)$ accounts for the death rate of the infected, with $\Lambda$ and $\mu$ being assumed to be positive H\"{o}lder functions on $\overline \Omega$. All the other parameters have the same interpretation as before. Throughout the paper, the initial data $S_0$ and $I_0$ are nonnegative continuous functions on $\overline\Omega$,
and there is a positive number of infected individuals initially, i.e., $\int_{\Omega}I_0(x)dx>0$.

Another widely accepted type of infection mechanism is the so-called {\it frequency-dependent transmission} (also called as {\it standard incidence infection mechanism}) of the form $\beta SI/(S+I)$, initiated by de Jong, Diekmann and Heesterbeek \cite{deJong} in 1995. In this scenario, \eqref{intro2} becomes
\begin{equation}
\left\{ \begin{array}{llll}
\dis S_t - d_S \Delta S = -\beta(x) \frac{SI}{S+I} +\gamma(x) I, &x\in\Omega,\, t>0,\\
\dis I_t - d_I \Delta I = \beta(x) \frac{SI}{S+I} -\gamma(x) I,& x\in\Omega,\,t>0,\\
\dis \frac{\partial S}{\partial \nu}= \frac{\partial I}{\partial \nu} = 0,&x\in\partial\Omega, \, t>0,\\
S(x,0)=S_0(x),\, I(x,0)=I_0(x),&x\in\Omega,
\end{array}\right.
\label{intro3}
\end{equation}
and its counterpart with linear recruitment reads
\begin{equation}
\left\{ \begin{array}{llll}
\dis S_t - d_S \Delta S = \Lambda(x) - S -\beta(x) \frac{SI}{S+I} +\gamma(x) I, &x\in\Omega,\, t>0,\\
\dis I_t - d_I \Delta I = \beta(x) \frac{SI}{S+I} -\gamma(x) I,& x\in\Omega,\,t>0,\\
\dis \frac{\partial S}{\partial \nu}= \frac{\partial I}{\partial \nu} = 0,&x\in\partial\Omega, \, t>0,\\
S(x,0)=S_0(x),\, I(x,0)=I_0(x),&x\in\Omega.
\end{array}\right.
\label{intro4}
\end{equation}
We note that \eqref{intro3} was first proposed by Allen et al. \cite{Allen} and then it (and its variants) was (were) studied extensively by many researchers \cite{Cui-Lou, Cui-Lam-Lou,DHK,FLW,Ge, Ge-Lin-Zhang, HHL, Kousuke,Peng, Peng-Liu,Peng-Yi,PZ-non-12} while \eqref{intro4} was analyzed by Li et al. \cite{LiPengWang}; see also \cite{LiBo2} for the case of logistic source instead of the linear one. One also observes that the total population in \eqref{intro3} is conserved and that in \eqref{intro4} varies.

In \cite{McCallum}, by comparing the outcomes of models with density-dependent
and frequency-dependent transmission rates to the observed epidemiology of certain diseases,
McCallum, Barlow and Hone concluded that both density-dependent
and frequency-dependent mechanisms have their own advantages in modeling disease spread,
depending on the transmission mode of the disease under consideration. They further pointed out that the transmission mode could be in general decided by estimating the force of infection.

On the other hand, epidemic theory for many ODE models has demonstrated that {\it the basic reproduction
number}, which may be considered as the fitness of a pathogen in a given population,
must be greater than unity for the pathogen to invade a susceptible population; see \cite{AM2,BCC,DH,JS,Murray,Th} and references therein. For the PDE models \eqref{intro2}-\eqref{intro4}, we can also find their respective {\it basic reproduction number} $\mathcal R_0$ and show that $\mathcal R_0$ serves as the threshold value to determine the transmission dynamics of disease; that is, if $\mathcal R_0>1$ the disease persists whereas it becomes extinct in the long run if $\mathcal R_0<1$.
However, the total population $N$, and the movement (migration) rates $d_I$ and $d_S$ may affect $\mathcal R_0$ of \eqref{intro2}-\eqref{intro4} in different manners.
As a result, each of the parameters $N,\,d_I,\,d_S$ plays a subtle role in  disease control; more detailed description will be made in the last discussion section.

The main goal of the current paper is twofold. The first one is to rigorously investigate qualitative properties of
\eqref{model} and the asymptotic profile of EE (when exists) with respect to the small movement rate $d_I$ or $d_S$.
Theorem \ref{uni-persis} below tells us that once $\mathcal R_0>1$, the infectious disease will uniformly
persist in space. Thus it becomes important to understand how the mobility of population migration
affects the spatial distribution of disease, because this will help decision-makers
to predict the pattern of disease occurrence and henceforth to conduct effective/optimal control strategies of disease eradication. Our result in Theorem \ref{model-ss-ds} indicates that restricting the motility rate of susceptible individuals cannot eradicate the disease for \eqref{model}, while this strategy works perfectly for \eqref{intro2} with small total population size (\cite[Corollary 2.4]{Wu-Zou}). Similar phenomenon was also observed in models \eqref{intro3} and \eqref{intro4}. Therefore, this suggests that varying total population tends to enhance the persistence of infectious disease.
The second goal is to compare our main results on the model \eqref{model} with those on models  \eqref{intro2}, \eqref{intro3} and \eqref{intro4}, so as to understand the influence of the factors such as infection mechanism, movement rate and source term on the eradication of epidemics, and to discuss possible applications in disease control. Numerical simulations are also carried out to reinforce the theoretical findings and illustrate possible outcomes for those unknown situations and hence provide clues for further analytical pursues. We refer to Section \ref{dis-sec} for detailed discussion on the implications of analytical results and comparisons between four related SIS epidemic models mentioned above.

The remainder of this paper is organized as follows. In Section \ref{sec-2}, we first obtain the global existence and boundedness of solutions to the parabolic problem \eqref{model}, then discuss the stability of equilibrium and the uniform persistence property via the basic reproduction number $\mathcal R_0$, and finally we consider the global attractivity of DFE and EE in spatially homogeneous environment. Section \ref{sec-3} is devoted to the study of asymptotic profile of EE when the diffusion rate of susceptible population or infected population approaches zero. In the last section, we perform numerical simulations, compare our results for \eqref{model} with those of the other three models, and discuss the implication of our findings in detail from the viewpoint of disease control.

In the rest of the paper, for notational convenience, we denote
 \begin{equation}
 g^*=\max_{x\in\overline\Omega}g(x) \ \ \mbox{and}\ \  g_*=\min_{x\in\overline\Omega}g(x),\ \
 \mbox{for $g=\Lambda,\,\beta,\,\gamma$ and $\mu$}.
 \nonumber
 \end{equation}

\section{Properties of solutions to \eqref{model}}
\label{sec-2}
In this section, we consider the parabolic system \eqref{model}
by first establishing the global existence and uniform boundedness of solutions,
and then show the local stability of DFE and uniform persistence via the basic reproduction number.
Lastly we investigate the global attractivity of DFE and EE in homogeneous environment.

\subsection{Global existence and uniform boundedness}

We now establish the global existence and boundedness of solutions to \eqref{model}.

\begin{theorem}\label{global-bdd}
The solution $(S(x,t),I(x,t))$ of problem \eqref{model} exists uniquely and globally.
Furthermore, there exists a positive constant $M$ depending on initial data and the parameters
$d_S$, $d_I$, $\Lambda$, $\beta$, $\gamma$ and $\mu$ such that
 \begin{equation}
\label{global-bdd-eq}
 \|S(\cdot,t)\|_{L^{\infty}(\Omega)}+\|I(\cdot,t)\|_{L^{\infty}(\Omega)}\leq M,\quad \forall t\geq 0.
 \end{equation}
Moreover, there exists some $M'>0$ independent of initial data fulfilling
\begin{equation}
 \|S(\cdot,t)\|_{L^{\infty}(\Omega)}+\|I(\cdot,t)\|_{L^{\infty}(\Omega)}\leq M',\quad \forall t\geq T,
\label{global-bdd-eq2}
\end{equation}
for some large $T>0$.
 \end{theorem}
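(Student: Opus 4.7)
The plan is to proceed in four steps: (i) local-in-time existence and nonnegativity; (ii) a uniform $L^1$-bound on $S+I$; (iii) a bootstrap from $L^1$ to $L^\infty$ via the smoothing of the Neumann heat semigroup; and (iv) upgrading the bound to one independent of the initial data for large $t$.

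For local existence, since the reaction terms are polynomial in $(S,I)$ and H\"older in $x$, standard quasilinear parabolic theory produces a unique classical solution $(S,I)\in C^{2,1}(\overline\Omega\times(0,T_{\max}))$ on a maximal interval $[0,T_{\max})$. Nonnegativity follows componentwise from the weak maximum principle: when $I=0$ the $I$-equation gives a trivial sub-solution, and when $S=0$ the right-hand side of the $S$-equation equals $\Lambda(x)+\gamma(x)I\ge 0$. Together with $I_0\not\equiv 0$ and the strong maximum principle one further obtains $I(\cdot,t)>0$ for $t>0$.

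Next I would add the two equations and integrate over $\Omega$, using the Neumann boundary condition to eliminate the diffusion terms; this yields
\[
\frac{d}{dt}\int_\Omega (S+I)\,dx=\int_\Omega\Lambda\,dx-\int_\Omega S\,dx-\int_\Omega\mu I\,dx\le\|\Lambda\|_{L^1(\Omega)}-\eta\int_\Omega(S+I)\,dx,
\]
where $\eta:=\min\{1,\mu_*\}>0$. Gronwall's inequality then yields a uniform $L^1$-bound and, in particular, $\limsup_{t\to\infty}\int_\Omega(S+I)(\cdot,t)\,dx\le\|\Lambda\|_{L^1(\Omega)}/\eta$, a bound independent of initial data.

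The main obstacle is to upgrade $L^1$ to $L^\infty$, which is nontrivial because the two components carry different diffusion rates (so $S+I$ does not obey a single scalar parabolic inequality) and the nonlinearity $\beta SI$ is genuinely coupled. My plan is a semigroup bootstrap. Write $S_t-d_S\Delta S+S\le\Lambda^*+\gamma^* I$ (using $-\beta SI\le 0$) and apply Duhamel's formula with the Neumann semigroup $e^{(d_S\Delta-1)t}$, invoking the $L^p$-$L^q$ smoothing estimate $\|e^{(d_S\Delta-1)t}f\|_{L^q(\Omega)}\le C(1+t^{-\frac{m}{2}(1/p-1/q)})e^{-t}\|f\|_{L^p(\Omega)}$ valid for $1\le p\le q\le\infty$. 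Starting from the $L^1$-bound of $I$ this gives $S\in L^{q_1}$ for some $q_1>1$; inserting this into the $I$-equation written as $I_t-d_I\Delta I\le\beta^*SI$ and performing an analogous $L^p$-iteration (in the spirit of Alikakos/Moser) pushes both components up to $L^\infty$ in finitely many steps, establishing \eqref{global-bdd-eq}. The $L^\infty$-bound on every finite interval precludes blow-up via the standard extensibility criterion, so $T_{\max}=\infty$. For the initial-data-free estimate \eqref{global-bdd-eq2}, one reruns the same bootstrap starting from the asymptotic $L^1$-bound, producing a constant $M'$ depending only on the data and a waiting time $T$ after which the estimate holds. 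The technically hardest step is the $L^p$-iteration, where one must carefully track the exponents and the time-integrability of the singular kernel $(t-s)^{-\frac{m}{2}(1/p-1/q)}$ to ensure a genuine gain in integrability at each iteration.
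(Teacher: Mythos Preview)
Your overall strategy is sound and would succeed, but the paper takes a much shorter route that you overlook. The key structural observation is that the $S$-equation can be written as
\[
S_t - d_S\Delta S = \Lambda(x) - S + \bigl(\gamma(x) - \beta(x)S\bigr)I,
\]
and the factor $\gamma - \beta S$ is nonpositive whenever $S \ge \|\gamma/\beta\|_{L^\infty(\Omega)}$. Hence the constant $M_2 := \max\{\Lambda^*,\ \|S_0\|_{L^\infty(\Omega)},\ \|\gamma/\beta\|_{L^\infty(\Omega)}\}$ is a supersolution of the $S$-equation \emph{regardless of what $I$ does}, and comparison gives $S \le M_2$ on $\overline\Omega \times [0,T_{\max})$ immediately. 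With $S$ already in $L^\infty$, the $I$-equation has the form $I_t - d_I\Delta I = c(x,t)I$ with bounded coefficient $c = \beta S - \gamma - \mu$, and a single application of the Alikakos $L^1\!\to\!L^\infty$ lemma (using the $L^1$-bound you correctly derived) finishes the job. For the initial-data-free estimate the paper again uses comparison: the ODE solution of $u' = \Lambda^* + \|\gamma/\beta\|_{L^\infty} - u$ with $u(0) = \|S_0\|_{L^\infty} + \|\gamma/\beta\|_{L^\infty}$ stays above $\|\gamma/\beta\|_{L^\infty}$, hence is a supersolution for $S$, and converges to $\Lambda^* + \|\gamma/\beta\|_{L^\infty}$ independently of $S_0$.

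Your coupled semigroup bootstrap is more general (it does not rely on the sign of $\gamma - \beta S$), but it forces you to alternate between the two equations, tracking how the $L^{q}$-gain for $S$ feeds into the Moser step for $I$ and back; the exponent bookkeeping is genuinely delicate since initially $SI$ is only controlled in $L^r$ with $r<1$. The paper's comparison argument decouples the system in one stroke and replaces all of that by a one-line supersolution check.
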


\begin{proof} From the standard theory for semilinear parabolic systems \cite{Amm}, it follows that \eqref{model} admits
a unique solution $(S(x,t),I(x,t))$ for $x\in\overline\Omega$ and $t\in [0,T_{\rm max})$ with $T_{\rm max}$ being
the maximal existence time. Moreover, the strong maximum principle for parabolic equations yields that the solution is positive on $\overline\Omega\times (0,T_{\rm max})$.
Integrating both PDEs of \eqref{model} and adding the resulting two identities, we are led to
\begin{align}
\frac{d}{dt}\int_\Omega (S(x,t)+I(x,t))dx
&=\int_\Omega \Lambda(x)dx-\int_\Omega (S(x,t)+\mu(x)I(x,t))dx \nonumber\\
&\leq \int_\Omega \Lambda(x)dx- \theta \int_\Omega (S(x,t)+I(x,t))dx,
\label{S-Gron}
\end{align}
where $\theta=\min\{1,\mu_* \}>0$. Then the well-known Gronwall's inequality applied to \eqref{S-Gron} asserts that there exists some constant $M_1>0$, such that
\begin{equation}
\int_\Omega (S(x,t)+I(x,t))dx \leq M_1,\quad \forall t\in (0,T_{\rm max}).
\label{S-I-L1}
\end{equation}

We now consider
\begin{equation}
\left\{\begin{array}{lll}
 S_t-d_S \Delta S=\Lambda(x)-S+\left[\gamma(x)-\beta(x)S\right]I, &x\in\Omega,\, t\in (0,T_{\rm max}),\\
 \frac{\partial S}{\partial \nu}=0,&x\in\partial \Omega,\, t\in (0,T_{\rm max}),\\
 S(x,0)=S_0(x),&x\in\Omega.
\end{array}\right.
\label{S-prob}
\end{equation}
For any nonnegative $I$, it is straightforward to verify that the positive constant
 $$
 M_2:=\max\left\{\|\Lambda\|_{L^\infty(\Omega)},\ \|S_0\|_{L^\infty(\Omega)},\ \left\| \frac{\gamma}{\beta}\right\|_{L^\infty(\Omega)} \right\}
 $$
is an upper solution of \eqref{S-prob}. The comparison principle for parabolic equations gives
 $$
 S(x,t)\leq M_2,\quad \forall x\in\overline\Omega,\ t\in (0,T_{\rm max}).
 $$
Since $S$ is uniformly bounded and the $L^1$-norm of $I(\cdot,t)$ is also bounded for $t\in (0,T_{\rm max})$ thanks to \eqref{S-I-L1},
in view of \cite[Theorem 3.1]{Alikakos} or \cite[Lemma 3.1]{PZ-non-12} and using the $I$-equation, we deduce that $I$ is also uniformly bounded
in $\overline\Omega\times (0,T_{\rm max})$. As a result, we must have $T_{\rm max}=\infty$ and \eqref{global-bdd-eq} is proved.

We next show \eqref{global-bdd-eq2}. To the aim, we need to construct a more accurate upper solution of problem \eqref{S-prob}, which is independent of $S_0$ for all large time.
In fact, let $u(t)$ be the unique solution of the following ODE.
\begin{equation}
u'(t) = \Lambda^* + \|\gamma/\beta\|_{L^\infty(\Omega)} - u(t),\quad t>0;\ \
u(0) = \|S_0\|_{L^\infty(\Omega)} + \|\gamma/\beta\|_{L^\infty(\Omega)}.
\nonumber
\end{equation}
It is clear that
\begin{equation}
u(t) = \left(\|S_0 \|_{L^\infty(\Omega)} + \left\|\frac{\gamma}{\beta}\right\|_{L^\infty(\Omega)} \right) e^{-t} + \left(\Lambda^* +\left\|\frac{\gamma}{\beta}\right\|_{L^\infty(\Omega)} \right) \left(1 - e^{-t}\right) \geq \left\|\frac{\gamma}{\beta}\right\|_{L^\infty(\Omega)},
\nonumber
\end{equation}
which implies $\gamma(x)-\beta(x) u(t) \leq 0,\ \forall x\in\overline\Omega,\, t>0.$
It can be easily checked that $u(t)$ is an upper solution of \eqref{S-prob} and consequently,
 $$
 S(x,t) \leq u(t) \to \Lambda^* +\left\|\frac{\gamma}{\beta}\right\|_{L^\infty(\Omega)}\quad \mbox{as }t\to \infty,\quad \forall x\in\overline\Omega.
 $$
That is, we obtain an upper bound of $\|S(\cdot,t)\|_{L^\infty(\Omega)}$ which is independent of initial data for all large time. Now applying \cite[Lemma 3.1]{PZ-non-12} to the $I$-equation, we deduce that $\|I(\cdot,t)\|_{L^\infty(\Omega)}$ can also be bounded by a positive constant independent of $(S_0, I_0)$ for large $t>0$.
\end{proof}

\subsection{Basic reproduction number and uniform persistence}

It is easily seen that the following elliptic problem
 \begin{equation}
 \label{dfe}
 -d_S\Delta S=\Lambda(x)-S,\ x\in\Omega;\ \
 \frac{\partial S}{\partial \nu}=0,\ x\in\partial\Omega
 \end{equation}
admits a unique positive solution $\tilde S$, which is globally asymptotically stable for the
corresponding parabolic equation with nonnegative initial data. Then $(\tilde S,0)$ is an equilibrium of \eqref{model},
which we call the disease-free equilibrium (DFE). Clearly, it is the unique DFE.

We define the basic reproduction number $\mathcal{R}_0$ as follows:
 \begin{equation}
 \mathcal{R}_0=\sup_{0\neq \varphi\in H^1(\Omega)}\frac{\int_{\Omega}\beta\tilde S \varphi^2 dx}{\int_{\Omega}\left[d_I|\nabla\varphi|^2+(\gamma+\mu) \varphi^2\right] dx}.
 \label{r0}
 \end{equation}
Indeed, one can follow the idea of next generation operators as in \cite{PZ-non-12} to introduce the basic reproduction number,
which coincides with the value $\mathcal{R}_0$. It is worth mentioning that the basic reproduction number $\mathcal{R}_0$ defined here is
qualitatively different from that in \cite{Allen} and \cite{Deng-Wu} in that it also depends implicitly on the diffusion rate $d_S$ of the susceptible individuals.

Let $\left(\lambda^*,\psi^*\right)$ be the principal eigenpair of the eigenvalue problem
 \begin{equation}
 d_I\Delta u+(\beta \tilde S-\gamma-\mu)u+\lambda u=0,\ x\in\Omega;\ \
 \frac{\partial u}{\partial\nu}=0,\ x\in\partial\Omega.
 \label{eigen-prob}
 \end{equation}
Then, we have the following properties of $\mathcal{R}_0$, the proof of which resembles that of \cite[Lemma 2.3]{Allen} and hence is omitted.

\begin{proposition}\label{property} The following assertions hold.
\begin{enumerate}
 \item[\rm{(a)}] $\mathcal{R}_0$ is a monotone decreasing function of $d_I$
with $\mathcal{R}_0\rightarrow \max_{\overline\Omega} \beta \tilde S/(\gamma+\mu)$ as $d_I\rightarrow0$ and
$\mathcal{R}_0\rightarrow\int_\Omega\beta \tilde S \mbox{d}x/{\int_\Omega(\gamma+\mu)\mbox{d}x}$ as
$d_I\rightarrow\infty$;
 \item[\rm{(b)}] If $\int_\Omega \beta(x) \tilde S(x)\mbox{d}x<\int_\Omega [\gamma(x)+\mu(x)]\mbox{d}x$, and $\beta \tilde S - (\gamma+\mu)$ changes sign, then there exists a
threshold value $d_I^*\in(0,\infty)$ such that $\mathcal{R}_0>1$ for
$d_I<d_I^*$ and $\mathcal{R}_0<1$ for $d_I>d_I^*$;
 \item[\rm{(c)}] If $\int_\Omega \beta(x) \tilde S(x)\mbox{d}x > \int_\Omega [\gamma(x)+\mu(x)]\mbox{d}x$, then $\mathcal{R}_0>1$ for all $d_I>0$.
 \item[\rm{(d)}] $\mathcal{R}_0>1$ when $\lambda^*<0$, $\mathcal{R}_0=1$ when $\lambda^*=0$, and $\mathcal{R}_0<1$ when $\lambda^*>0$.
\end{enumerate}
\end{proposition}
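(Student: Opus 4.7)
The plan is to identify $\mathcal{R}_0$ as the principal eigenvalue of a weighted eigenvalue problem, deduce part (d) from this identification, obtain part (a) by Rayleigh-quotient arguments, and finally read off (b) and (c) as corollaries of (a).

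First I would show that $1/\mathcal{R}_0$ is the principal eigenvalue of the symmetric weighted problem
\begin{equation}
-d_I\Delta\varphi+(\gamma+\mu)\varphi=\sigma\,\beta\tilde S\,\varphi\ \text{in }\Omega,\quad \frac{\partial\varphi}{\partial\nu}=0\ \text{on }\partial\Omega,
\nonumber
\end{equation}
so that by the Courant--Fischer characterisation $\sigma_{\min}=1/\mathcal{R}_0$ is attained by a positive eigenfunction $\varphi^*$. Combining this with the Rayleigh-quotient formula for the principal eigenvalue of \eqref{eigen-prob},
\begin{equation}
\lambda^*=\inf_{0\neq\varphi\in H^1(\Omega)}\frac{\int_\Omega\bigl[d_I|\nabla\varphi|^2+(\gamma+\mu-\beta\tilde S)\varphi^2\bigr]dx}{\int_\Omega\varphi^2dx},
\nonumber
\end{equation}
gives the chain of equivalences $\mathcal{R}_0\gtreqless 1\Longleftrightarrow\lambda^*\lesseqgtr 0$, establishing part (d).

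For part (a), monotonicity is immediate: for every fixed test $\varphi$, the denominator in \eqref{r0} is increasing in $d_I$, so the supremum is non-increasing, and strict monotonicity follows by plugging the maximiser corresponding to one value of $d_I$ into the quotient for a larger one. To obtain the limit as $d_I\to 0$, I would combine the trivial upper bound
\begin{equation}
\mathcal{R}_0\leq\max_{\overline\Omega}\frac{\beta\tilde S}{\gamma+\mu}\quad\text{for all }d_I>0,
\nonumber
\end{equation}
(obtained by discarding the gradient term and applying the pointwise inequality $\beta\tilde S\varphi^2\le\max\tfrac{\beta\tilde S}{\gamma+\mu}(\gamma+\mu)\varphi^2$) with a standard concentration argument: choose $x_0$ where $\beta\tilde S/(\gamma+\mu)$ is maximised and a smooth bump $\varphi_\epsilon$ supported in a shrinking neighbourhood of $x_0$, so that $d_I\int|\nabla\varphi_\epsilon|^2\to 0$ as $d_I\to 0$ and the ratio approaches the maximum. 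For $d_I\to\infty$, the constant test function $\varphi\equiv 1$ supplies the lower bound $\int_\Omega\beta\tilde S\,dx/\int_\Omega(\gamma+\mu)\,dx$, while the matching upper bound is proved by extracting a subsequence of normalised maximisers $\varphi_{d_I}$: since $d_I\int|\nabla\varphi_{d_I}|^2$ stays bounded, $\int|\nabla\varphi_{d_I}|^2\to 0$, forcing any weak limit to be constant in $H^1(\Omega)$, and passing to the limit in the Rayleigh quotient yields the desired equality.

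Parts (b) and (c) are then immediate consequences. In (c), the limit $d_I\to\infty$ in (a) gives $\mathcal{R}_0>1$ there, and by the (strict) monotonicity of $d_I\mapsto\mathcal{R}_0(d_I)$ we retain $\mathcal{R}_0>1$ for all $d_I>0$. In (b), the sign change of $\beta\tilde S-(\gamma+\mu)$ implies $\max_{\overline\Omega}\beta\tilde S/(\gamma+\mu)>1$, so $\mathcal{R}_0\to\;>\!1$ as $d_I\to 0$, while the integral condition gives $\mathcal{R}_0\to\;<\!1$ as $d_I\to\infty$; continuity and strict monotonicity in $d_I$ produce a unique crossing point $d_I^*\in(0,\infty)$. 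The main technical obstacle is the $d_I\to\infty$ limit, which hinges on the $H^1$-compactness step that drives the optimal profile toward a constant; the remaining pieces reduce to elementary manipulations of the Rayleigh quotient together with the standard variational characterisation of the principal Neumann eigenvalue.
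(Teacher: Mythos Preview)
Your proposal is correct and follows the standard Rayleigh-quotient approach that the paper itself defers to: the paper omits the proof entirely, stating only that it ``resembles that of \cite[Lemma 2.3]{Allen} and hence is omitted.'' Your sketch---identifying $1/\mathcal{R}_0$ as the principal eigenvalue of the weighted Neumann problem, deducing (d) from the variational formula for $\lambda^*$, obtaining (a) by monotonicity of the denominator together with concentration at a maximiser for $d_I\to0$ and $H^1$-compactness forcing a constant limit for $d_I\to\infty$, and reading off (b), (c) from (a)---is precisely the argument in Allen et al.\ adapted to the weight $\beta\tilde S$ and potential $\gamma+\mu$, so there is nothing to add.
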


It turns out that the stability of the DFE $(\tilde S,0)$ is completely determined by the size of $\mathcal{R}_0$.

\begin{proposition}\label{locstab}
The DFE $(\tilde S,0)$ is linearly stable if $\mathcal{R}_0<1$, and it is linearly unstable if $\mathcal{R}_0>1.$
\end{proposition}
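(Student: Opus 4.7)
The plan is to linearize \eqref{model} at the DFE $(\tilde S, 0)$ and read off its spectrum using the block-triangular structure, then convert the criterion in terms of the principal eigenvalue $\lambda^*$ of \eqref{eigen-prob} into a criterion on $\mathcal{R}_0$ via Proposition \ref{property}(d).

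First I would write $S = \tilde S + \phi$ and $I = 0 + \psi$, and drop quadratic terms to obtain the linearized system
\begin{equation*}
\left\{
\begin{array}{ll}
\phi_t = d_S \Delta \phi - \phi + \bigl[\gamma(x) - \beta(x)\tilde S(x)\bigr]\psi, & x\in\Omega,\ t>0,\\
\psi_t = d_I \Delta \psi + \bigl[\beta(x)\tilde S(x) - \gamma(x) - \mu(x)\bigr]\psi, & x\in\Omega,\ t>0,\\
\dfrac{\partial \phi}{\partial \nu} = \dfrac{\partial \psi}{\partial \nu} = 0, & x\in\partial\Omega,\ t>0.
\end{array}
\right.
\end{equation*}
The key observation is that this system is block lower-triangular: the $\psi$-equation is decoupled from $\phi$, while $\phi$ is driven by a standard diffusion-decay operator plus a forcing from $\psi$. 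Hence the spectrum of the linearized operator $\mathcal L$ is the union of the spectra of its diagonal blocks.

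Next I would analyze the two blocks. The $\phi$-block is $\mathcal L_1 := d_S \Delta - \mathrm{Id}$ with homogeneous Neumann boundary condition; all its eigenvalues are $\leq -1 < 0$, so $\mathcal L_1$ contributes only to stability. The $\psi$-block is $\mathcal L_2 := d_I \Delta + (\beta \tilde S - \gamma - \mu)$, whose eigenvalue problem is exactly \eqref{eigen-prob} after the sign convention there; its largest eigenvalue is $-\lambda^*$, where $\lambda^*$ is the principal eigenvalue defined in \eqref{eigen-prob}. Therefore the principal eigenvalue of $\mathcal L$ equals $\max\{-1,\, -\lambda^*\}$, which is negative if and only if $\lambda^* > 0$ and positive if and only if $\lambda^* < 0$.

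Finally I would invoke Proposition \ref{property}(d), which precisely gives the equivalence between the sign of $\lambda^*$ and whether $\mathcal{R}_0$ is less than, equal to, or greater than $1$. In particular, $\mathcal{R}_0 < 1 \iff \lambda^* > 0$, yielding linear stability, and $\mathcal{R}_0 > 1 \iff \lambda^* < 0$, yielding linear instability via a growing mode of the form $e^{-\lambda^* t}\psi^*(x)$ in the $\psi$-component. The only mildly delicate point is justifying that the spectrum of the block-triangular operator is indeed the union of the two diagonal spectra in this PDE setting, but this follows from a standard Fredholm argument: for any $\mu$ outside $\sigma(\mathcal L_1)\cup\sigma(\mathcal L_2)$, one can solve $(\mu - \mathcal L_2)\psi = g$ first and then $(\mu - \mathcal L_1)\phi = f + (\gamma - \beta\tilde S)\psi$, producing a bounded inverse of $\mu - \mathcal L$. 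No further calculation is needed.
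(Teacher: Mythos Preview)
Your proof is correct and follows essentially the same approach as the paper: linearize at $(\tilde S,0)$, exploit the block-triangular structure so that the spectrum is determined by the two diagonal blocks $d_S\Delta-\mathrm{Id}$ and $d_I\Delta+(\beta\tilde S-\gamma-\mu)$, and then invoke Proposition~\ref{property}(d). The paper phrases the triangular decomposition as a case split on whether $\psi\equiv 0$ or $\psi\not\equiv 0$, and for instability it explicitly solves $d_S\Delta\phi-\phi+\lambda^*\phi=(\beta\tilde S-\gamma)\psi^*$ to produce the full eigenfunction $(\phi^*,\psi^*)$, whereas you rely on the general spectral fact for block-triangular operators; the content is the same.
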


\begin{proof}
The linearization of \eqref{model} around the DFE $(\tilde S,0)$ reads
 \begin{equation}
 \left\{ \begin{array}{lll}
  \eta_t-d_S\Delta\eta =-\eta+(-\beta\tilde S+\gamma)\xi,&x\in\Omega,t>0,\\
  \xi_t -d_I\Delta \xi =(\beta\tilde S-\gamma-\mu)\xi, &x\in\Omega,t>0,\\
  \frac{\partial \eta}{\partial \nu}=\frac{\partial \xi}{\partial\nu}=0,&x\in\partial\Omega, t>0,
 \end{array}\right.
 \nonumber
 \end{equation}
with $\eta(x,t)=S(x,t)-\tilde S(x)$ and $\xi(x,t)=I(x,t)$. Now suppose that
 $(\eta(x,t),\xi(x,t))= (e^{-\lambda t}\phi(x),e^{-\lambda t}\psi(x) )$
is a solution of the above linear system with $\lambda$ being a complex number. Then simple calculations show that
 \begin{equation}
 \left\{ \begin{array}{lll}
 d_S\Delta\phi - \phi+ (-\beta\tilde S+\gamma )\psi+\lambda\phi=0,&x\in\Omega,\\
 d_I\Delta\psi+ (\beta\tilde S-\gamma-\mu )\psi+\lambda\psi=0,&x\in\Omega,\\
  \frac{\partial \phi}{\partial \nu}=\frac{\partial\psi}{\partial \nu}=0,&x\in\partial\Omega.
 \end{array}\right.
 \label{linear-ellip-DFE}
 \end{equation}

We first assume that $\mathcal R_0<1$ and shall show that $(\tilde S,0)$ is linearly stable;
that is, if $(\lambda,\phi,\psi)$ is any solution of \eqref{linear-ellip-DFE} with $\phi$ or $\psi$ not identically zero,
then ${\mbox Re}(\lambda)>0.$ There are two cases to consider: $\psi\equiv 0$ and $\phi\not\equiv 0$; $\psi\not\equiv 0$.

In the former case, clearly $(\lambda,\phi)$ is an eigenpair of the eigenvalue problem
 \begin{equation}
 d_S\Delta u -u+\lambda u=0,\ x\in\Omega; \ \
 \frac{\partial u}{\partial \nu}=0,\ x\in\partial\Omega.
 \label{eigen-prob1}
 \end{equation}
It is obvious that $\lambda$ must be real due to the self-adjoint property of the operator involved in \eqref{eigen-prob1} and hence $\lambda\geq 1$, as we wanted.
If the latter case happens, it follows that $(\lambda,\psi)$ is an eigenpair of the eigenvalue problem \eqref{eigen-prob}
and hence $\lambda$ is real and $\lambda\geq \lambda^*>0$ due to Proposition \ref{property} (d). Thus, the linear stability of
$(\tilde S,0)$ is proved.

We now suppose $\mathcal R_0>1$ and show the instability of $(\tilde S,0)$.
Proposition \ref{property}(d) yields that $\lambda^*<0$. It is well known that the following linear problem
 \begin{equation}
 d_S\Delta \phi-\phi+\lambda^*\phi=(\beta\tilde S-\gamma)\psi^*,\ x\in\Omega,\ \
 \frac{\partial \phi}{\partial \nu}=0,\ x\in\partial\Omega
 \nonumber
 \end{equation}
admits a solution $\phi^*$. Consequently, $(\lambda^*,\phi^*,\psi^*)$ becomes a solution of \eqref{linear-ellip-DFE}
with $\lambda^*<0$ and $\psi^*>0$ and so $(\tilde S,0)$ is linearly unstable.
\end{proof}

Based on the \lq\lq ultimately uniformly boundedness\rq\rq\, \eqref{global-bdd-eq2},
we are able to establish the uniform persistence property of \eqref{model}
when the basic reproduction number $\mathcal R_0>1$.
In fact, one can easily adapt the arguments of \cite[Theorem 3.3]{PZ-non-12}, developed by Magal and Zhao
(see \cite[Theorem 4.5]{Magal-Zhao} and \cite[Chapter 13]{Zhao}), to conclude the following assertion.

\begin{theorem}
\label{uni-persis}
Suppose that $\mathcal R_0>1$. Then system \eqref{model} is uniformly persistent, i.e.,
there exists some $\eta>0$ independent of the initial data $(S_0,I_0)$, such that
$$\liminf_{t\to\infty}S(x,t)\geq \eta\quad \mbox{and}\quad \liminf_{t\to\infty}I(x,t)\geq \eta\quad \mbox{uniformly for }x\in\overline\Omega.$$
Furthermore, \eqref{model} admits at least one EE provided that $\mathcal R_0>1$.
\end{theorem}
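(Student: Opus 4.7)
The plan is to realize \eqref{model} as a dynamical system on the phase space
$X := C(\overline\Omega;\mathbb R_+)\times C(\overline\Omega;\mathbb R_+)$ and then apply the abstract uniform persistence theorem of Magal--Zhao (\cite[Theorem 4.5]{Magal-Zhao} or \cite[Chapter 13]{Zhao}). Define the semiflow $\Phi_t:X\to X$ generated by \eqref{model}, together with the decomposition
\[
X_0:=\bigl\{(S_0,I_0)\in X\,:\,I_0\not\equiv 0\bigr\},\qquad
\partial X_0:=\bigl\{(S_0,I_0)\in X\,:\,I_0\equiv 0\bigr\}.
\]
The strong maximum principle applied to the $I$-equation shows $I(\cdot,t)>0$ in $\overline\Omega$ for all $t>0$ whenever $I_0\not\equiv0$, so $X_0$ is positively invariant; $\partial X_0$ is obviously positively invariant since $I\equiv 0$ reduces the system to \eqref{dfe}. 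Theorem \ref{global-bdd} provides point dissipativity via the ultimate bound \eqref{global-bdd-eq2}, and parabolic smoothing (the solution map is eventually compact on bounded sets) gives the asymptotic compactness required by the Magal--Zhao framework.

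Next I would identify the dynamics on the boundary. On $\partial X_0$ the system collapses to \eqref{dfe}, whose unique positive solution $\tilde S$ is globally attractive; hence the maximal compact invariant set on $\partial X_0$ is the singleton $M_\partial=\{(\tilde S,0)\}$, which is clearly isolated and acyclic in $\partial X_0$. The crucial step is to verify the strong repeller condition
\[
W^s\bigl((\tilde S,0)\bigr)\cap X_0=\emptyset,
\]
i.e.\ no trajectory starting in $X_0$ converges to the DFE. To this end, assume for contradiction that some orbit satisfies $(S(\cdot,t),I(\cdot,t))\to(\tilde S,0)$ uniformly. By Proposition \ref{property}(d), $\mathcal R_0>1$ implies $\lambda^*<0$; let $\psi^*>0$ be the associated principal eigenfunction of \eqref{eigen-prob}. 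For any sufficiently small $\delta>0$ one can choose $t_0$ large so that $\beta(x)S(x,t)-\gamma(x)-\mu(x)\geq \beta(x)\tilde S(x)-\gamma(x)-\mu(x)-\delta$ on $\overline\Omega\times[t_0,\infty)$. A parabolic comparison with the subsolution $\varepsilon\psi^*(x)e^{(-\lambda^*-\delta)(t-t_0)}$, valid for any $\varepsilon>0$ chosen so that $I(x,t_0)\geq\varepsilon\psi^*(x)$ (possible because $I(\cdot,t_0)>0$ in $\overline\Omega$ and $\psi^*\in C^1(\overline\Omega)$), shows that $I(\cdot,t)$ grows exponentially once $-\lambda^*-\delta>0$, contradicting $I(\cdot,t)\to 0$. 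This establishes the repelling property.

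With the hypotheses of the Magal--Zhao theorem verified---point dissipativity, asymptotic compactness, invariance of $X_0$ and $\partial X_0$, isolation/acyclicity of $\{(\tilde S,0)\}$ in $\partial X_0$, and the above repeller condition---we obtain a constant $\eta>0$ independent of $(S_0,I_0)\in X_0$ such that
\[
\liminf_{t\to\infty}I(x,t)\geq\eta\quad\text{uniformly in }\overline\Omega.
\]
The lower bound $\liminf_{t\to\infty}S(x,t)\geq\eta$ (after shrinking $\eta$ if necessary) follows from the $S$-equation: once $I$ is uniformly bounded above by $M'$ and bounded below away from $0$ is irrelevant, the fact that $\Lambda(x)-S+\gamma(x)I-\beta(x)SI\geq \Lambda_*-S-\beta^*M'S$ lets one apply a standard comparison argument with the positive constant steady state of the resulting scalar equation. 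Finally, existence of at least one EE follows from uniform persistence and dissipativity in a standard way: the semiflow $\Phi_t$ restricted to the forward-invariant set $X_0$ admits a global attractor $\mathcal A_0\subset X_0$ bounded away from $\partial X_0$, and applying a fixed-point theorem for continuous maps on convex subsets of Banach spaces (as in \cite[Theorem 4.7]{Magal-Zhao} or \cite[Chapter 1]{Zhao}) to the Poincar\'e-type map $\Phi_1|_{\mathcal A_0}$ yields an equilibrium of \eqref{model} in $X_0$, which is necessarily an EE.

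The main obstacle is the repelling step: transferring the linearized instability encoded by $\lambda^*<0$ into a genuine nonlinear lower bound for $I(\cdot,t)$ requires careful control of the error term $\beta(S-\tilde S)I$ in the $I$-equation and the construction of a positive subsolution aligned with $\psi^*$; everything else is a bookkeeping adaptation of \cite[Theorem 3.3]{PZ-non-12}.
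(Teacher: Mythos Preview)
Your proposal is correct and follows exactly the route the paper indicates: the paper does not give a detailed proof but simply refers to adapting \cite[Theorem~3.3]{PZ-non-12} via the Magal--Zhao persistence theory, and your outline (dissipativity from \eqref{global-bdd-eq2}, boundary dynamics reducing to $(\tilde S,0)$, weak repeller via the principal eigenpair $(\lambda^*,\psi^*)$ with $\lambda^*<0$, and EE existence from the associated fixed-point result) is precisely that adaptation spelled out. There is nothing substantively different to compare.
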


\subsection{Global stability in homogeneous environment}

In this subsection, we consider the global stability of the DFE and EE of \eqref{model} in homogeneous environment,
i.e., all of parameters $\Lambda$, $\beta,\gamma$ and $\mu$ are positive constants. In view of \eqref{r0},
we now have an explicit expression for the basic reproduction number
$\mathcal R_0=\frac{\Lambda \beta}{\gamma+\mu}$
and the unique DFE is given by $(\tilde S,0)=(\Lambda,0)$. On the other hand, there exists a unique constant EE
$(\hat S, \hat I)$ if and only if $\mathcal R_0>1$,
where
 $$
 \hat S = \frac{\gamma+\mu}{\beta}=\frac{\Lambda}{\mathcal R_0} \quad \mbox{and}\quad \hat I
 =\frac{\Lambda}{\mu}\left(1-\frac{1}{\mathcal R_0}\right)=\frac{\gamma+\mu}{\mu \beta} \left(\mathcal R_0-1\right).
 $$

For later purpose, we recall a simple fact which can be found in \cite[Lemma 2.5.1]{Wang}:

\begin{lemma}
\label{lem-Lya}
Let $a$ and $b$ be positive constants. Assume that $\varphi, \psi\in C^1([a,\infty))$, $\psi(t)\geq 0$ in $[a,\infty)$
and $\varphi$ is bounded from below. If $\varphi'(t)\leq -b \psi(t)$ and $\psi'(t)\leq K$ in $[a,\infty)$ for some constant $K$, then $\lim_{t\to\infty}\psi(t)=0.$
\end{lemma}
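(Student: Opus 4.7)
The plan is to prove this Barbalat-type lemma in two stages: first extract an integral smallness consequence from the differential inequality on $\varphi$, and then upgrade integrability of $\psi$ to pointwise convergence using the one-sided bound on $\psi'$.

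First I would integrate the inequality $\varphi'(t)\le -b\psi(t)$ from $a$ to an arbitrary $T>a$. This yields
\[
b\int_a^T \psi(t)\,dt \le \varphi(a)-\varphi(T) \le \varphi(a) - \inf_{[a,\infty)}\varphi,
\]
and since $\varphi$ is bounded below by hypothesis, the right-hand side is uniformly bounded in $T$. Letting $T\to\infty$ gives $\int_a^\infty \psi(t)\,dt <\infty$, which in particular forces $\liminf_{t\to\infty}\psi(t)=0$.

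Next I would argue by contradiction for the pointwise conclusion. Assume $\psi(t)\not\to 0$; then by the nonnegativity of $\psi$ there exist $\varepsilon>0$ and a sequence $t_n\to\infty$ with $\psi(t_n)\ge \varepsilon$. The case $K\le 0$ is immediate, because $\psi'\le K\le 0$ means $\psi$ is non-increasing, which together with $\psi\ge 0$ and $\int_a^\infty \psi<\infty$ forces $\psi\to 0$. For $K>0$, I would use the estimate
\[
\psi(t_n)-\psi(t) = \int_t^{t_n}\psi'(\tau)\,d\tau \le K(t_n-t),\qquad t\le t_n,
\]
so that $\psi(t)\ge \varepsilon - K(t_n-t)\ge \varepsilon/2$ on the interval $[t_n-\delta,t_n]$ with $\delta=\varepsilon/(2K)$. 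Consequently
\[
\int_{t_n-\delta}^{t_n}\psi(t)\,dt \ge \frac{\varepsilon\delta}{2}=\frac{\varepsilon^2}{4K}.
\]
Passing to a subsequence so that the intervals $[t_n-\delta,t_n]$ are pairwise disjoint (possible since $t_n\to\infty$), summing these contributions would contradict the finiteness of $\int_a^\infty \psi(t)\,dt$ established in the first step, completing the argument.

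The main obstacle is really just the case $K>0$: a two-sided bound on $\psi'$ would be the classical Barbalat setup, whereas here only an upper bound is available and one has to verify that the upper bound together with nonnegativity of $\psi$ still prevents arbitrarily fast downward spikes on short time intervals. The computation above shows that an upper bound on $\psi'$ controls the backward-in-time decrease of $\psi$ from a peak, which is exactly what the contradiction requires; once that observation is made, the rest is bookkeeping.
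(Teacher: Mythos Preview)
Your argument is correct. The first step correctly extracts integrability of $\psi$ on $[a,\infty)$ from the decay inequality for $\varphi$ together with the lower bound on $\varphi$; the second step handles the $K\le 0$ and $K>0$ cases cleanly, and the backward-in-time estimate $\psi(t)\ge \varepsilon-K(t_n-t)$ is exactly the right way to exploit the one-sided bound $\psi'\le K$ to produce a uniform mass of $\psi$ on each interval $[t_n-\delta,t_n]$.

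As for comparison with the paper: the paper does not prove this lemma at all. It merely quotes the statement as a known fact from \cite[Lemma 2.5.1]{Wang} and then applies it in the proof of Theorem~\ref{glo-sta}. So you are supplying a self-contained justification that the paper outsources to a reference; there is nothing to compare on the level of technique.
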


By constructing suitable Lyapunov functionals, we can show
\begin{theorem}
\label{glo-sta}
Assume that $d_S=d_I$. Then the following assertions hold.
\begin{enumerate}
\item[\rm {(i)}] If $\mathcal R_0\leq 1$, then the DFE is globally attractive;
\item[\rm {(ii)}] If $\mathcal R_0>1$, then the EE is globally attractive.
\end{enumerate}
\end{theorem}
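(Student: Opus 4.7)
The plan is to construct Lyapunov functionals tailored to each target equilibrium and then conclude convergence via the LaSalle-type argument packaged in Lemma \ref{lem-Lya}. The hypothesis $d_S=d_I=:d$ is used crucially so that the two Laplacian contributions integrate by parts against the appropriate multipliers with the correct sign.

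For part (ii), the EE satisfies the pointwise identities $\beta\hat S=\gamma+\mu$ and $\Lambda=\hat S+\mu\hat I$. I would try the classical Korobeinikov-type candidate
\[
V(t)=\int_\Omega\Bigl[\tfrac{1}{2}(S-\hat S)^2+\tfrac{\mu}{\beta}\bigl(I-\hat I-\hat I\ln(I/\hat I)\bigr)\Bigr]dx,
\]
which is nonnegative and vanishes exactly at $(\hat S,\hat I)$. Differentiating along \eqref{model}, the two Laplacian pieces integrate by parts to $-d\int_\Omega|\nabla S|^2dx$ and $-d\int_\Omega(\mu\hat I/\beta I^2)|\nabla I|^2dx$ respectively, both nonpositive. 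The coefficient $\mu/\beta$ in front of the log-term is chosen precisely so that the bilinear $(S-\hat S)(I-\hat I)$ contributions coming from the two reactions cancel after invoking the EE identities, leaving
\[
V'(t)=-d\int_\Omega|\nabla S|^2dx-d\int_\Omega\frac{\mu\hat I}{\beta I^2}|\nabla I|^2dx-\int_\Omega(1+\beta I)(S-\hat S)^2dx\le 0.
\]
Since $V'=0$ forces $S\equiv\hat S$ and $\nabla I\equiv 0$, plugging $S\equiv\hat S$ back into the $S$-equation yields $I\equiv\hat I$, so the EE is the only invariant subset in $\{V'=0\}$; a LaSalle-type passage (via Lemma \ref{lem-Lya}) then concludes.

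For part (i), I would try the simpler quadratic-linear candidate
\[
V(t)=\int_\Omega\Bigl[\tfrac{1}{2}(S-\Lambda)^2+k\,I\Bigr]dx
\]
with $k>0$ to be chosen. Expanding $V'$ with $u:=S-\Lambda$, $A:=\beta\Lambda-\gamma$, and $B:=(\gamma+\mu)(1-\mathcal R_0)\ge 0$, the coupling term becomes a quadratic in $u$ weighted by $I$; nonpositivity of this quadratic for every $u$ reduces to $(k\beta-A)^2\le 4\beta k B$. The discriminant of this inequality as a quadratic in $k\beta$ equals $16B\mu\ge 0$, so an admissible $k>0$ always exists, and the symmetric choice $k=(\mu+B)/\beta$ produces
\[
V'(t)\le -d\int_\Omega|\nabla S|^2dx-\int_\Omega(S-\Lambda)^2dx-\frac{B\mu}{\beta}\int_\Omega I\,dx.
\]
When $\mathcal R_0<1$ the last term decays $\int_\Omega I$, and Lemma \ref{lem-Lya} yields $(S,I)\to(\Lambda,0)$. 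In the borderline $\mathcal R_0=1$ case one has $B=0$ so the last term drops; here $V'=0$ forces $S\equiv\Lambda$, and substitution into the $S$-equation gives $-(\beta\Lambda-\gamma)I=-\mu I\equiv 0$, hence $I\equiv 0$, so LaSalle again closes the argument.

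The main obstacles will be twofold. First, identifying the coefficient (the $\mu/\beta$ in (ii) and the quadratic-compatibility range of $k$ in (i)) that makes the cross reaction terms cancel or be absorbed is the essential algebraic input; an inattentive choice would leave an indefinite $(S-\hat S)(I-\hat I)$ or $uI$ term that cannot be signed without extra assumptions such as $\mu\ge 1/2$. Second, converting the differential inequality $V'\le-(\text{good terms})$ into genuine uniform convergence of $(S,I)$ requires bounded time-derivatives for the auxiliary $\psi$ in Lemma \ref{lem-Lya}; the required uniform Hölder bounds on $(S_t,I_t)$ follow from the ultimate $L^\infty$ estimate \eqref{global-bdd-eq2} of Theorem \ref{global-bdd} combined with standard parabolic Schauder theory.
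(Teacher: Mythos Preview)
Your proposal is correct and takes a genuinely different route from the paper. The paper builds its Lyapunov functionals on the \emph{sum} $S+I$: for (i) it uses $\tfrac12\int_\Omega[(S-\Lambda)+I]^2+\tfrac{\mu+1}{\beta}\int_\Omega I$, and for (ii) the analogous quantity centred at $(\hat S,\hat I)$ with the same log-correction coefficient $\tfrac{\mu+1}{\beta}$. The reason for coupling through $S+I$ is that under $d_S=d_I=d$ the Laplacian contribution collapses to $-d\int_\Omega|\nabla(S+I)|^2$; the resulting dissipation is $-\int_\Omega(S-\Lambda)^2-\mu\int_\Omega I^2$ in (i) and $-\int_\Omega(S-\hat S)^2-\mu\int_\Omega(I-\hat I)^2$ in (ii), so \emph{both} components are controlled and Lemma~\ref{lem-Lya} alone finishes, even in the borderline $\mathcal R_0=1$. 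Your functionals instead test the two equations against the separate multipliers $S-\hat S$ and $1-\hat I/I$ (respectively $S-\Lambda$ and the constant $k$), so each Laplacian integrates by parts to its own nonpositive term independently; in fact your computation never uses $d_S=d_I$, and the argument goes through verbatim for arbitrary $d_S,d_I>0$ in the spatially homogeneous setting --- a strictly stronger conclusion than the paper obtains, despite your opening remark that equal diffusion is ``used crucially''. The cost is that your dissipation controls only $(S-\hat S)^2$ and the two gradient terms, not $(I-\hat I)^2$, so after Lemma~\ref{lem-Lya} yields $S\to\hat S$ you still need a genuine $\omega$-limit-set / LaSalle step (precompactness in $C^2$ from the parabolic estimates, invariance of the $\omega$-limit set, then substitution of $S\equiv\hat S$ into the $S$-equation) to force $I\equiv\hat I$; the same applies to the borderline $\mathcal R_0=1$ in (i). This step is routine, but it is not the content of Lemma~\ref{lem-Lya}, which is a Barbalat-type statement rather than an invariance principle, so your phrase ``a LaSalle-type passage (via Lemma~\ref{lem-Lya})'' should be unpacked a little.
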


\begin{proof} Set $d_S=d_I=d$. To verify (i), for any solution $(S,I)$ of \eqref{model}, we define
 $$
V(t)=\frac{1}{2} \int _{\Omega}\left[(S-\Lambda)+I \right]^2 dx + \frac{\mu+1}{\beta} \int_\Omega I dx.
 $$
Then, for all $t>0$, direct calculations show that
 \begin{align}\label{PW-hom-Ly1a}
 V'(t) &= \int _{\Omega}\left[(S-\Lambda)+I\right] (S_t+I_t) dx +\frac{\mu+1}{\beta} \int_\Omega I_t dx \nonumber \\
 &=  \int _{\Omega}\left[(S-\Lambda)+I \right] (d_S\Delta S+\Lambda - S -\mu I+d_I\Delta I)  dx
 +\frac{\mu+1}{\beta} \int_\Omega (d_I\Delta I + \beta SI-\gamma I - \mu I)dx \nonumber \\
 & =  -d \int _{\Omega} |\nabla (S+I)|^2 dx -\int_\Omega (S-\Lambda)^2 dx -\mu \int_\Omega I(S-\Lambda)d x +\int_\Omega I(\Lambda -S) dx\nonumber\\
& \quad \,\,  -\mu \int_\Omega I^2 dx +\frac{\mu+1}{\beta} \int_\Omega (\beta SI-\gamma I -\mu I)dx \nonumber \\
 & \leq -\int _{\Omega}(S-\Lambda)^2 dx-\mu \int_\Omega I^2 dx+\frac{\mu+1}{\beta}\left[\beta \Lambda-(\gamma+\mu)\right] \int_\Omega I dx\leq 0,
 \nonumber
 \end{align}
due to the assumption that $\mathcal R_0=\beta \Lambda/(\gamma+\mu)\leq 1$.
Define
$$\psi(t)=\int_\Omega (S-\Lambda)^2 dx+\mu \int_\Omega I^2 dx\geq 0.$$
Recall that Theorem \ref{global-bdd} tells us that both $\|S(\cdot,t)\|_{L^\infty(\Omega)}$ and $\|I(\cdot,t)\|_{L^\infty(\Omega)}$ are bounded.
Hence, by \cite[Theorem A2]{BDG}, we have
\begin{equation}
 \|S(\cdot,t)\|_{C^{2+\alpha}(\overline\Omega)}+\|I(\cdot,t)\|_{C^{2+\alpha}(\overline\Omega)}\leq C_0,\ \ \forall t\geq1,
 \label{SI-C2}
\end{equation}
for some positive constant $C_0$. Furthermore, using both PDEs of \eqref{model}, one can easily see that $\psi'(t)$ is bounded from above for $t\in [1,\infty)$.
We deduce from Lemma \ref{lem-Lya} (by taking $\varphi(t)=V(t)$) that
 $$
 \left(S(x,t),I(x,t)\right)\to(\Lambda,0)=(\tilde S,0)\ \ \mbox{in}\, \left(L^2(\Omega)\right)^2,\ \ \mbox{as}\ t\to\infty.
 $$
Furthermore, \eqref{SI-C2} indicates that $(S(\cdot,t),I(\cdot,t))$ is compact in $C^2(\overline\Omega)$
for $t\geq 1$. This, together with the above $L^2$-convergence, yields that
$$
 \left(S(x,t),I(x,t)\right)\to (\tilde S,0)\ \ \mbox{in}\, \left(C^2(\overline\Omega)\right)^2,\ \ \mbox{as}\ t\to\infty;
 $$
that is, $(\tilde S, 0)$ attracts all solutions of \eqref{model}.

We next prove (ii). Define
$$
W(t)=\frac{1}{2} \int_\Omega \left[\left(S-\hat S\right)+\left(I-\hat I\right) \right]^2 dx+\frac{\mu+1}{\beta} \int_\Omega \left(I-\hat I-\hat I\ln\frac{I}{\hat I} \right)dx\geq 0, \ \ \forall t>0.
$$
By straightforward computations, we have
\begin{align}
W'(t)&=\int_\Omega \left[\left(S-\hat S\right)+\left(I-\hat I\right) \right] (S_t+I_t)dx+
\frac{\mu+1}{\beta} \int_\Omega \left(1-\frac{\hat I}{I}\right) I_t dx\nonumber\\
 &= \int_\Omega \left[\left(S-\hat S\right)+\left(I-\hat I\right) \right] (d \Delta S +\Lambda -S -\mu I +d \Delta I)dx\nonumber\\
&\quad \,\, + \frac{\mu+1}{\beta} \int_\Omega \left(1-\frac{\hat I}{I}\right) (d\Delta I + \beta SI-\gamma I -\mu I)dx \nonumber\\
&=-d\int_\Omega |\nabla (S+I)|^2 dx -\frac{\mu+1}{\beta} d \hat I\int_\Omega\frac{|\nabla I|^2}{I^2} dx +\frac{\mu+1}{\beta}\int_\Omega (I-\hat I)(\beta S -\beta \hat S)dx \nonumber\\
&\quad\,\, +\int _\Omega \left[(S-\hat S)+(I-\hat I)\right] \left(\hat S+\mu \hat I-S -\mu I\right)dx \nonumber\\
& \leq -\int_\Omega (S-\hat S)^2 dx - \mu \int_\Omega (I-\hat I)^2 dx \leq 0, \nonumber
\end{align}
where we have used the fact that
$\Lambda =\hat S+\mu \hat I$ and $\gamma+\mu = \beta \hat S.$

In  Lemma \ref{lem-Lya}, let
$$\phi(t)=W(t),\ \ \ \psi(t)= \int_\Omega (S-\hat S)^2 dx + \mu \int_\Omega (I-\hat I)^2 dx, \ \ \forall t>0.$$
Then arguing similarly as before, we eventually conclude that
$$
\left(S(x,t),I(x,t)\right)\to (\hat S,\hat I)\ \ \mbox{in}\, \left(C^2(\overline\Omega)\right)^2,\ \ \mbox{as}\ t\to\infty.
$$
The proof is complete.
\end{proof}

The above theorem tells us that system \eqref{model} is uniformly persistent in
homogeneous environment provided $\mathcal R_0>1$, at least in the equal diffusion rate case.

\begin{remark}\em{
For general positive functions $\Lambda,\,\beta,\,\gamma,\,\mu$ and constants $d_S,\,d_I>0$, we suspect that
\eqref{model} has a unique EE which is globally attractive if $\mathcal R_0>1$, and the DFE is globally attractive if $\mathcal R_0\leq1$. However the justification of this suspicion is highly nontrivial and has to be left open in the current paper. }
\end{remark}

\section{Asymptotic profile of EE}
\label{sec-3}
 In this section, we are concerned with the asymptotic behavior of EE of \eqref{model},
which is a positive solution to the elliptic system:
 \begin{equation}\label{model-ss}
 \left\{ \begin{array}{lll}
  -d_S\Delta S=\Lambda(x)- S -\beta(x)SI+\gamma(x)I, & x\in \Omega,\\
  -d_I\Delta I=\beta(x)SI -[\gamma(x)+\mu(x)] I, & x\in \Omega,\\
  \frac{\partial S}{\partial\nu}=\frac{\partial I}{\partial \nu}=0,& x\in \partial\Omega
 \end{array}\right.
 \end{equation}
as one of the diffusion rates $d_S,\,d_I$ goes to zero.

\subsection{The case of $d_S\to 0$}

Using a singular perturbation argument, one can easily show that $\tilde S$, being the unique positive solution of \eqref{dfe},
converges uniformly to $\Lambda$ as $d_S\to 0$ (see \cite[Lemma 3.2]{PSW-non-08}).
Therefore, according to the continuity of eigenvalues with respect to the potential function,
we see that the principal eigenvalue $\lambda^*$ of \eqref{eigen-prob} converges to the principal eigenvalue of the following eigenvalue problem
 \begin{equation}
 d_I\Delta u+\left(\beta \Lambda-\gamma-\mu\right)u+\lambda u=0,\ x\in\Omega;\ \
 \frac{\partial u}{\partial\nu}=0,\ x\in\partial\Omega,
 \label{eigen-prob2}
 \end{equation}
which is denoted by $\lambda_0$. To ensure the existence of EE for all small $d_S$, one has to assume $\lambda_0<0$.

Now we are ready to establish the main result of this subsection.

\begin{theorem}\label{model-ss-ds}
Assume that $\lambda_0<0$.
Fix $d_I>0$, and let $d_S\rightarrow0$, then every positive solution
$(S,I)$ of \eqref{model-ss} satisfies {\rm(}up to a subsequence of $d_S\rightarrow0${\rm )}
 $$
 \left(S,I\right)\rightarrow \left(\underline S,\underline I\right)\ \ \mbox{uniformly on } \overline{\Omega},
 $$
where $\underline S(x)=\frac{\Lambda (x)+ \gamma \underline I(x)}{1+\beta \underline I(x)},$
and $\underline I$ is a positive solution to
 \begin{equation}
 \label{model-ss-ds-small}
 -d_I\Delta  \underline I=\beta(x) \underline S \underline I-(\gamma(x)+\mu(x)) \underline I,\ x\in \Omega;\ \
 \frac{\partial \underline I}{\partial \nu}=0,\ x\in \partial\Omega.
 \end{equation}
\end{theorem}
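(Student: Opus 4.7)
My plan is to carry out a singular-perturbation analysis in three stages: derive a priori bounds uniform in $d_S$, resolve $S$ algebraically in the limit, and then use the hypothesis $\lambda_0<0$ to exclude the trivial limit.

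\emph{A priori bounds.} First I would rewrite the $S$-equation as $-d_S\Delta S+(1+\beta I)S=\Lambda+\gamma I$; evaluating at an interior maximum of $S$ forces $\|S\|_{L^\infty}\le\max\{\Lambda^*,\gamma^*/\beta_*\}$, uniformly in $d_S$. Integrating the sum of the two equations of \eqref{model-ss} over $\Omega$ yields $\int_\Omega(S+\mu I)\,dx=\int_\Omega\Lambda\,dx$, hence $\|I\|_{L^1}\le\Lambda^*|\Omega|/\mu_*$. Since $d_I$ is held fixed and the coefficient $\beta S-\gamma-\mu$ in the $I$-equation is now $L^\infty$-bounded uniformly, a standard $L^p$-bootstrap (or Moser iteration) followed by Schauder estimates produces $\|I\|_{C^{2,\alpha}(\overline\Omega)}\le C$ independent of $d_S$.

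\emph{Resolving $S$ and passing to the limit.} Let $\hat S:=(\Lambda+\gamma I)/(1+\beta I)$ be the candidate limit. The identity $\hat S(1+\beta I)=\Lambda+\gamma I$ converts the $S$-equation into
\[
-d_S\Delta(S-\hat S)+(1+\beta I)(S-\hat S)=d_S\Delta\hat S\ \text{in }\Omega,\qquad \partial_\nu(S-\hat S)=-\partial_\nu\hat S\ \text{on }\partial\Omega.
\]
The $C^{2,\alpha}$-bound on $I$ transfers to $\hat S$ (a smooth function of $I$), so the interior forcing is $O(d_S)$ in $L^\infty$; a maximum-principle argument combined with a boundary corrector absorbing the $O(\sqrt{d_S})$ boundary-layer contribution yields $\|S-\hat S\|_{L^\infty}\to 0$. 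Using the $C^2$-compactness of $I$, I would then extract a subsequence along which $I\to\underline I\ge 0$ in $C^2(\overline\Omega)$; by continuity $\hat S\to(\Lambda+\gamma\underline I)/(1+\beta\underline I)=\underline S$ uniformly, so $S\to\underline S$ uniformly. Passing to the limit in the $I$-equation recovers \eqref{model-ss-ds-small} with $\partial_\nu\underline I=0$ on $\partial\Omega$.

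\emph{Ruling out $\underline I\equiv 0$.} The main obstacle is that the formal limit could in principle collapse; this is precisely where $\lambda_0<0$ is essential. Arguing by contradiction, suppose $\|I\|_\infty\to 0$ along a subsequence, and normalize $\tilde I:=I/\|I\|_\infty$, so $\|\tilde I\|_\infty=1$ and
\[
-d_I\Delta\tilde I=(\beta S-\gamma-\mu)\tilde I\ \text{in }\Omega,\qquad \partial_\nu\tilde I=0\ \text{on }\partial\Omega.
\]
The uniform elliptic regularity gives $\tilde I\to\tilde I_0\ge 0$ in $C^2(\overline\Omega)$ with $\|\tilde I_0\|_\infty=1$; since $I\to 0$ forces $\hat S\to\Lambda$ and hence $S\to\Lambda$ uniformly, $\tilde I_0$ satisfies \eqref{eigen-prob2} with eigenvalue $0$. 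The strong maximum principle makes $\tilde I_0>0$, so $0$ is the principal eigenvalue of \eqref{eigen-prob2}, contradicting $\lambda_0<0$. Hence $\underline I\not\equiv 0$, and the strong maximum principle applied to \eqref{model-ss-ds-small} upgrades this to $\underline I>0$ on $\overline\Omega$. The two delicate points are the boundary-layer handling in the second stage and this eigenvalue-based dichotomy, the only step in the argument that uses $\lambda_0<0$.
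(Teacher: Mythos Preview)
Your overall architecture---uniform a priori bounds, compactness for $I$, and the eigenvalue contradiction via the normalization $\tilde I=I/\|I\|_\infty$ to exclude $\underline I\equiv 0$---matches the paper's proof, and the third stage is essentially identical. The substantive difference is in how the convergence of $S$ is obtained. The paper never writes an equation for $S-\hat S$; instead it first extracts a $C^1$-convergent subsequence of $I$, establishes $\underline I>0$ by the eigenvalue argument (showing $S_n\to\Lambda$ in the degenerate case $I_n\to 0$ by trapping $S_n$ between solutions of two auxiliary scalar Neumann problems), and only \emph{then} sandwiches $S_n$ between solutions of $-d_S\Delta w=\Lambda-(1+\beta(\underline I\pm\epsilon))w+\gamma(\underline I\mp\epsilon)$, invoking a standard singular-perturbation lemma to send each barrier to its algebraic limit. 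That route needs no regularity on $S$ beyond $L^\infty$ and no boundary-layer construction.

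Your direct approach, by contrast, has a regularity gap. You assert $\|I\|_{C^{2,\alpha}}\le C$ via Schauder, but Schauder requires the coefficient $\beta S-\gamma-\mu$ to be uniformly H\"older; since you only control $\|S\|_{L^\infty}$ uniformly in $d_S$ (any H\"older bound on $S$ coming from its own equation carries a constant that blows up as $d_S\to 0$), the $I$-equation yields only $I\in W^{2,p}$ uniformly, hence $C^{1,\alpha}$ but not $C^{2,\alpha}$. Then $\hat S=(\Lambda+\gamma I)/(1+\beta I)$ is at best $C^{1,\alpha}$ uniformly---and merely H\"older under the paper's standing assumption that $\Lambda,\beta,\gamma$ are only H\"older---so the forcing $d_S\Delta\hat S$ is not $O(d_S)$ in $L^\infty$ and may not even be classically defined. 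Your maximum-principle-plus-boundary-corrector step is therefore built on unavailable regularity, and the subsequent claim ``$I\to 0$ forces $S\to\Lambda$'' inherits the same defect. The cleanest repair is to replace this stage with the paper's comparison argument; alternatively one could try to run the error estimate in $L^p$ or in a weak sense, but that requires substantially more work than your sketch suggests.
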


\begin{proof} Mentioned as before, \eqref{model-ss} has at least one EE for all small $d_S>0$ when $\lambda_0<0$.
In the following, we divide our argument into three steps for sake of clarity.

{\bf Step 1: A priori bounds for $S$ and $I$.}
Assume $S(x_0)=\max_{x\in\overline\Omega} S(x)$. We apply the maximum principle \cite[Proposition 2.2]{LN-JDE-96}
to the first equation of \eqref{model-ss} to derive
$
\Lambda(x_0)-S(x_0)-\beta (x_0) S(x_0) I(x_0)+\gamma (x_0) I(x_0)\geq 0,
$
or,
\begin{equation}
\Lambda^*\geq \Lambda(x_0) \geq S(x_0) + I(x_0) \left(\beta (x_0) S(x_0) -\gamma (x_0) \right).
\label{thm5-1}
\end{equation}
If $\beta (x_0) S(x_0) -\gamma (x_0)\leq 0$, then
 $ \max_{\overline\Omega}S = S(x_0)\leq \gamma(x_0)/\beta(x_0)\leq
\| \gamma/\beta\|_{L^\infty(\Omega)}.$
If $\beta (x_0) S(x_0) -\gamma (x_0)> 0$, it follows from \eqref{thm5-1} that
$\max_{\overline\Omega}S = S(x_0)\leq \Lambda^*.$ Thus, for any $d_S,\,d_I>0$, we have
\begin{equation}
\max_{\overline\Omega}S \leq \max\left\{ \Lambda^*, \  \ \left\| \frac{\gamma}{\beta} \right\|_{L^\infty(\Omega)} \right\}.
\label{thm5-2}
\end{equation}

On the other hand, set $S(x_1)=\min_{x\in \overline\Omega}S(x)$. Then an application of the maximum principle \cite[Proposition 2.2]{LN-JDE-96} implies that
$\Lambda(x_1)-S(x_1)-\beta (x_1) S(x_1) I(x_1)+\gamma (x_1) I(x_1)\leq 0$,
equivalently,
\begin{equation}
\frac{\Lambda(x_1)+\gamma (x_1) I(x_1)}{1+\beta(x_1) I(x_1)}\leq S(x_1).
\nonumber
\end{equation}
Obviously, there exists a positive constant $c_*$, independent of $d_S,\,d_I>0$, such that
\begin{equation}
c_*\leq\frac{\Lambda(x_1)+\gamma (x_1) I(x_1)}{1+\beta(x_1) I(x_1)}.
\nonumber
\end{equation}
Hence, for any $d_S,\,d_I>0$, it holds
\begin{equation}
c_*\leq S(x), \ \ \ \forall x\in\overline\Omega.
\label{thm5-2-a}
\end{equation}

Integrating both PDEs of \eqref{model-ss} over $\Omega$ yields
 $$
 \int_\Omega\left\{\Lambda(x)- S -\beta(x)SI+\gamma(x)I\right\}dx=0,\ \ \int_\Omega\left\{\beta(x)SI-[\gamma(x)+\mu(x)]I\right\}dx=0,
 $$
from which it immediately follows that
\begin{equation}
\mu_* \int_\Omega I dx\leq \int_\Omega \mu I dx +\int_\Omega S dx  = \int_\Omega \Lambda dx\leq |\Omega| \Lambda^*
\label{thm5-3}
\end{equation}
and
 \begin{equation}
\beta_* \int_\Omega SI dx\leq \int_\Omega\beta SI dx\leq (\gamma^*+\mu^*)\int_\Omega I dx\leq\frac{ |\Omega| \Lambda^*(\gamma^*+\mu^*)}{\mu_*}.
\label{thm5-3-a}
\end{equation}

We now write the $I$-equation as
\begin{equation}
\dis -\Delta I = \frac{1}{d_I} \left[\beta S -(\gamma+\mu) \right] I,\ x\in\Omega;\ \
\dis \frac{\partial I}{\partial \nu}=0,\ x\in\partial \Omega.
\label{thm5-4}
\end{equation}
According to the Harnack-type inequality (see, e.g., \cite{Lieberman-SIAM-05} or \cite[Lemma 2.2]{PSW-non-08}), \eqref{thm5-2} and \eqref{thm5-3}, we are led to
 \begin{equation}
 \max_{\overline{\Omega}}I\leq C\min_{\overline{\Omega}}I \leq C \frac{1}{|\Omega|}\int_\Omega I dx \leq C.
\label{thm5-5}
 \end{equation}
Hereafter, $C$ represents a positive constant independent of small $d_S>0$ which may vary from place to place.

{\bf Step 2: Convergence of $I$.} Recall that $I$ satisfies \eqref{thm5-4}.
By \eqref{thm5-2} and \eqref{thm5-5}, we have
 \begin{equation*}
 \left\|\frac{1}{d_I}\left[\beta S - (\gamma+\mu) \right]  I\right\|_{L^p(\Omega)} \leq C,\quad\forall\,p>1.
 \end{equation*}
From the standard $L^p$-estimate for elliptic equations (see, e.g., \cite{GT}), it follows that
$\left\|I\right\|_{W^{2,p}(\Omega)}\leq C$ for any given $p>1$.
Taking $p$ to be sufficiently large, we see from the Sobolev embedding that
$\left\|I\right\|_{C^{1+\alpha}(\overline\Omega)}\leq C$ for some $0<\alpha<1$.
As a result, there exists a subsequence of $d_S\to 0$, say $d_n:=d_{S,n}$, satisfying $d_n\to 0$ as $n\rightarrow\infty$, and
a corresponding positive solution $(S_n,I_n)$ of \eqref{model-ss} with $d_S=d_n$, such that
 \begin{equation}
 \label{thm5-7}
 I_n\rightarrow \underline I\ \ \mbox{uniformly on}\ \overline{\Omega},\ \ \mbox{as}\ n\to\infty,
 \end{equation}
where $0\leq \underline I\in C^1(\overline{\Omega})$. In view of  \eqref{thm5-5},
 \begin{equation}
 \label{thm5-8}
 \mbox{either}\ \underline I\equiv0\ \ \mbox{on}\ \overline{\Omega}\ \ \mbox{or}\ \ \underline I>0\ \ \mbox{on}\ \overline{\Omega}.
 \end{equation}

Suppose the former holds in \eqref{thm5-8}; that is,
 \begin{equation}
 \label{thm5-9}
 I_n \rightarrow 0\ \ \mbox{uniformly on}\ \overline{\Omega},\ \ \mbox{as}\ n\rightarrow \infty.
 \end{equation}
Then for sufficiently small $\epsilon>0$, we have
 $ 0\leq I_n(x)\leq\epsilon,\, \forall x\in\overline\Omega$, for all large $n$.
This fact, together with the first equation of \eqref{model-ss}, implies that for all large $n$, $S_n$ satisfies
 \begin{equation*}
 \dis -d_n\Delta S_n\leq \Lambda - S_n+\gamma^*\epsilon,\ x\in \Omega;\ \ \
 \dis \frac{\partial S_n}{\partial \nu}=0,\  x\in \partial\Omega
 \end{equation*}
and
 \begin{equation*}
 \dis -d_n\Delta S_n\geq \Lambda - S_n- \beta^*\epsilon S_n,\ x\in \Omega;\ \ \
 \dis \frac{\partial S_n}{\partial \nu}=0,\ x\in \partial\Omega.
 \end{equation*}
We consider the following two auxiliary problems:
 \begin{equation}
 \dis -d_n\Delta u = \Lambda - u+\gamma^*\epsilon,\ x\in \Omega;\ \ \
 \dis \frac{\partial u}{\partial \nu}=0,\  x\in \partial\Omega,
\label{thm5-10}
 \end{equation}
and
 \begin{equation}
 \dis -d_n\Delta v = \Lambda - v - \beta^*\epsilon v,\ x\in \Omega;\ \ \
 \dis \frac{\partial v}{\partial \nu}=0,\ x\in \partial\Omega.
\label{thm5-11}
 \end{equation}
It is clear that systems \eqref{thm5-10} and \eqref{thm5-11} admit a unique positive solution, denoted by $u_n$ and $v_n$, respectively. A simple sub-supsolution argument, combined with the uniqueness, guarantees that
$v_n\leq S_n \leq u_n$ on $\overline\Omega$ for all large $n$.
Using a singular perturbation argument as in \cite[Lemma 2.4]{DPW-JDE}, it can be shown that
 $$
 u_n\to \Lambda + \gamma ^* \epsilon,\ \ v_n\to\frac{ \Lambda}{1+\beta^*\epsilon}\quad \mbox{uniformly on }\overline\Omega,\mbox{ as }n\to \infty.
 $$
Sending $n\to\infty$, we find
$$\frac{\Lambda(x)}{1+\beta^*\epsilon}\leq \liminf_{n\to \infty}S_n(x)\leq \limsup_{n\to\infty}S_n(x) \leq \Lambda(x)+\gamma^*\epsilon.$$
Thanks to the arbitrariness of small $\epsilon>0$, we obtain that
\begin{equation}
S_n \to \Lambda \ \ \mbox{uniformly on }\overline\Omega, \ \mbox{as } n\to \infty.
\label{thm5-13}
\end{equation}

Observe that $I_n$ fulfills
 \begin{equation}
 -d_I\Delta I_n=\beta(x)S_n I_n -(\gamma+\mu) I_n,\ x\in\Omega; \quad \frac{\partial I_n}{\partial \nu}=0,\ x\in\partial\Omega.
\label{thm5-135}
 \end{equation}
Define $\tilde{I}_n:=\frac{I_n}{\|I_n\|_{L^\infty(\Omega)}}$.
Then $\|\tilde{I}_n\|_{L^\infty(\Omega)}=1$
for all $n\geq 1$, and $\tilde{I}_n$ solves
 \begin{equation}
 \label{thm5-14}
 \dis -d_I\Delta\tilde{I}_n=\left[\beta(x)S_n - (\gamma+\mu) \right]\tilde{I}_n, \ x\in \Omega;\ \ \
 \dis \frac{\partial \tilde{I}_n}{\partial \nu}=0,\ x\in \partial\Omega.
 \end{equation}
As before, through a standard compactness argument for elliptic equations, after passing to a further subsequence if necessary, we may assume that
 \begin{equation*}
 \label{PW-elliptic-estimate-12k}
 \tilde{I}_n\rightarrow \tilde{I}\ \ \mbox{in}\ C^1(\overline{\Omega}),\ \ \mbox{as}\ n\rightarrow \infty,
 \end{equation*}
where $0\leq \tilde{I}\in C^1(\overline{\Omega})$ with $\|\tilde{I}\|_{L^\infty(\Omega)}=1$. By \eqref{thm5-13} and
\eqref{thm5-14}, $\tilde{I}$ satisfies
 \begin{equation}
 \label{thm5-15}
 \dis -d_I\Delta\tilde{I}=\left[\beta \Lambda-(\gamma+\mu) \right]\tilde{I}, \ x\in \Omega;\ \ \
 \dis \frac{\partial\tilde{I}}{\partial \nu}=0,\ x\in \partial\Omega.
 \end{equation}
The Harnack-type inequality (see, \cite{Lieberman-SIAM-05} or \cite[Lemma 2.2]{PSW-non-08}) applied to \eqref{thm5-15} yields $\tilde{I}>0$ on $\overline{\Omega}$.
However, the positiveness of $\tilde I$ indicates that the principal eigenvalue $\lambda_0$ of the eigenvalue problem \eqref{eigen-prob2} must be zero (with $\tilde I$ being a corresponding eigenfunction), contradicting our assumption that
$\lambda_0<0$. Thus, \eqref{thm5-9} cannot occur,
and we must have $\underline I>0\ \mbox{on}\ \overline{\Omega}$. That is,
 \begin{equation}
 \label{thm5-16}
 I_n\rightarrow \underline I>0\ \ \mbox{uniformly on}\ \overline{\Omega},\ \mbox{as}\ n\rightarrow \infty.
 \end{equation}

{\bf Step 3: Convergence of $S$.} Notice that $S_n$ solves
\begin{equation}
  -d_n\Delta S_n=\Lambda - S_n-\beta S_n I_n +\gamma I_n,\ x\in\Omega;\ \
  \frac{\partial S_n}{\partial\nu}=0,\ x\in\partial\Omega.
 \label{thm5-17}
 \end{equation}
In view of \eqref{thm5-16}, we see that for any small $\epsilon>0$, it holds
 \begin{equation}
 0<\underline I(x)-\epsilon\leq I_n(x)\leq \underline I(x)+\epsilon,\ \ \forall x\in\overline\Omega
 \label{thm5-18}
 \end{equation}
for all large $n$.
Thus, for all sufficiently large $n$, we have
$$\Lambda - S_n -\beta S_n (\underline I+\epsilon)+\gamma (\underline I-\epsilon)\leq \Lambda - S_n - \beta S_n I_n+\gamma I_n \leq \Lambda -S_n -\beta S_n (\underline I-\epsilon)+\gamma (\underline I+\epsilon).$$

Given large $n$, we consider the following auxiliary problem
 \begin{equation}
-d_n\Delta w=\Lambda - w -\beta w (\underline I+\epsilon)+\gamma (\underline I-\epsilon) , \ x\in\Omega;\quad \frac{\partial w}{\partial \nu}=0,\ x\in\partial\Omega.
\label{thm5-19}
 \end{equation}
It is clear that \eqref{thm5-19} admits a unique positive solution, denoted by $\overline w_n$.
By similar arguments to those in the proof
of \cite[Lemma 2.4]{DPW-JDE}),
we notice that
 \begin{equation*}
 \overline w_n\rightarrow \frac{\Lambda + \gamma (\underline I-\epsilon)}{1+\beta (\underline I+\epsilon)}   \ \ \mbox{uniformly on}\ \overline{\Omega},\ \mbox{as}\ n\rightarrow \infty.
 \end{equation*}
Since $S_n$ is an upper solution of \eqref{thm5-19}, it then follows that
 \begin{equation}
 \label{thm5-20}
 \liminf_{n\to\infty}S_n(x) \geq \lim_{n\to \infty}\overline w_n(x)=\frac{\Lambda(x) + \gamma(x) (\underline I(x)-\epsilon)}{1+\beta(x) (\underline I(x)+\epsilon)}  \ \ \ \mbox{uniformly on}\ \overline{\Omega}.
 \end{equation}
Similarly, one can further show that
\begin{equation}
 \label{thm5-21}
 \limsup_{n\to\infty}S_n(x) \leq \frac{\Lambda(x) + \gamma(x) (\underline I(x)+\epsilon)}{1+\beta(x) (\underline I(x)-\epsilon)}  \ \ \ \mbox{uniformly on}\ \overline{\Omega}.
 \end{equation}
In view of \eqref{thm5-20} and \eqref{thm5-21}, combined with the arbitrariness of small $\epsilon>0$, we have
$$\lim_{n\to\infty}S_n(x) = \underline S(x):= \frac{\Lambda (x)+\gamma (x) \underline I(x)}{1+\beta(x)\underline I(x)}\ \ \ \mbox{uniformly on}\ \overline{\Omega}.$$
Because of \eqref{thm5-135}, it can be easily seen that $\underline I$ satisfies \eqref{model-ss-ds-small}. The proof is complete.
\end{proof}

\subsection{The case of $d_I\to 0$}

This subsection is devoted to the investigation of the asymptotic behavior of positive solutions of \eqref{model-ss} with $d_S>0$ being fixed and $d_I\to 0$.
Because of mathematical difficulty, we can only deal with one space dimension case, that is, the habitat $\Omega$ is an interval.
Without loss of generality, we take $\Omega=(0,1)$.

In light of \ref{property} (a) and \ref{uni-persis}, we assume that
$\{\beta(x)\tilde S(x)>{\gamma(x)+\mu(x)}:\ x\in[0,1]\}$ is non-empty so that $\mathcal{R}_0>1$ and thus \eqref{model-ss} admits positive solutions for all small $d_I>0$.
Our main result reads as follows.

\begin{theorem}
\label{model-ss-di}
Assume that the set $\{x\in [0,1]: \ \beta(x)\tilde S(x)>\gamma(x)+\mu(x)\}$ is non-empty.
Fix $d_S>0$ and let $d_I\rightarrow0$, then every positive solution $\left(S,I\right)$
of \eqref{model-ss} satisfies (up to a subsequence of $d_I$) that
$S\to S_0\ \ \mbox{uniformly on}\ [0,1],$
where $S_0\in C([0,1])$ and $S_0>0$ on $[0,1]$, and
$\int_0^1 Idx\to I_0$ for some positive constant $I_0$.
\end{theorem}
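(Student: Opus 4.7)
The plan is to combine a compactness argument that extracts uniform convergence of $S$ along $d_I\to 0$ with a principal-eigenvalue contradiction ruling out the degenerate case $\int_0^1 I\,dx\to 0$.

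First, I would observe that the a priori bounds derived in Step~1 of the proof of Theorem~\ref{model-ss-ds} are uniform in both $d_S$ and $d_I$, so in the present regime we still have $c_*\leq S\leq C$ on $[0,1]$ and $\int_0^1 I\,dx\leq C$ uniformly as $d_I\to 0$. Multiplying the first equation of \eqref{model-ss} by $S$ and integrating then yields a uniform $H^1(0,1)$-bound on $S$; since in one space dimension the embedding $H^1(0,1)\hookrightarrow C^{1/2}([0,1])$ is compact, along any sequence $d_{I,n}\to 0$ the corresponding $S_n$ subconverges uniformly on $[0,1]$ to some $S_0\in C([0,1])$, and the pointwise lower bound transfers to $S_0\geq c_*>0$. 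A further subsequence extraction yields $\int_0^1 I_n\,dx\to I_0\in[0,C]$.

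The only remaining and most delicate point is $I_0>0$, which I would establish by contradiction. Assuming $\int_0^1 I_n\,dx\to 0$, the source $(\gamma-\beta S_n)I_n$ in the $S_n$-equation tends to zero in $L^1$. Setting $w_n:=S_n-\tilde S$, which solves $-d_S w_n''+w_n=(\gamma-\beta S_n)I_n$ with homogeneous Neumann boundary conditions, and testing against $w_n$ (using that $\|w_n\|_{L^\infty}$ is uniformly bounded) gives $\|w_n\|_{H^1(0,1)}\to 0$, hence $S_n\to\tilde S$ uniformly on $[0,1]$.

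For the contradiction: since $I_n>0$ satisfies $-d_{I,n}I_n''-(\beta S_n-\gamma-\mu)I_n=0$ under Neumann boundary conditions, it is a positive principal eigenfunction of the operator $\mathcal{L}_n u:=-d_{I,n}u''-(\beta S_n-\gamma-\mu)u$, so its principal eigenvalue satisfies $\lambda_1(\mathcal{L}_n)=0$. On the other hand, the hypothesis provides an open subset $U\subset[0,1]$ on which $\beta\tilde S-\gamma-\mu\geq c_0>0$. Fixing a nontrivial $\phi\in H^1(0,1)$ with $\mathrm{supp}\,\phi\subset U$, and using uniform convergence $S_n\to\tilde S$ to guarantee $\beta S_n-\gamma-\mu\geq c_0/2$ on $U$ for all large $n$, the Rayleigh quotient yields
\begin{equation*}
\lambda_1(\mathcal{L}_n)\leq\frac{d_{I,n}\int_0^1(\phi')^2\,dx-(c_0/2)\int_0^1\phi^2\,dx}{\int_0^1\phi^2\,dx}<0
\end{equation*}
for all sufficiently large $n$, which contradicts $\lambda_1(\mathcal{L}_n)=0$. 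The main obstacle is precisely this final variational step: one must exploit the hypothesis $\{\beta\tilde S>\gamma+\mu\}\neq\emptyset$ to force the principal eigenvalue of the singularly perturbed operator $\mathcal{L}_n$ to be strictly negative in the limit $d_{I,n}\to 0$, despite the vanishing diffusion coefficient.
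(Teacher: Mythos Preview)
Your argument is correct and complete, but it diverges from the paper's proof at essentially every technical step, in each case trading a cited result for a self-contained energy estimate.

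For the compactness of $S$, the paper invokes the elliptic $L^1$-theory of Brezis--Strauss \cite{BW}: since the right-hand side of the $S$-equation is bounded in $L^1(0,1)$ (via \eqref{thm5-3} and \eqref{thm5-3-a}), one obtains $\|S\|_{W^{1,p}(0,1)}\leq C$ for every $p>1$ and hence $C^\alpha$-compactness. Your route---test the $S$-equation against $S$ and use the $L^\infty$ bound on $S$ together with the $L^1$ bound on $I$ to control the cross terms---yields only an $H^1$-bound, but in one dimension that is already enough for uniform convergence. Your approach is more elementary; the paper's gives marginally stronger regularity that is not needed here.

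For the identification $S_0=\tilde S$ under the assumption $\int_0^1 I_n\to 0$, the paper integrates the $S_n$-equation from $0$ to $x$ (and from $x$ to $1$) to pass to the limit in $S_n'$ and recover the ODE \eqref{dfe} for $S_0$. Your device of writing $w_n=S_n-\tilde S$, which solves a linear Neumann problem with right-hand side $(\gamma-\beta S_n)I_n\to 0$ in $L^1$, and then testing against $w_n$ itself, is cleaner and avoids the pointwise integration.

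For the final contradiction, the paper appeals to the small-diffusion eigenvalue asymptotics from \cite{Allen}: $\lambda_1(d_n,\gamma+\mu-\beta S_n)\to\min_{[0,1]}\{\gamma+\mu-\beta\tilde S\}<0$. Your direct Rayleigh-quotient computation with a test function localized where $\beta\tilde S-\gamma-\mu\geq c_0>0$ is precisely the variational content of that asymptotic statement, made explicit; it has the advantage of being self-contained and of using only the \emph{upper} bound on $\lambda_1$, which is all the contradiction requires.
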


\begin{proof}
Notice that \eqref{thm5-2}, \eqref{thm5-2-a}, \eqref{thm5-3} and \eqref{thm5-3-a} remain true in the current situation.
Since the spatial domain is one dimensional and $S$ satisfies
\begin{equation}
\dis -d_SS''(x) +S(x)= \Lambda -\beta  S(x)I(x)+\gamma I(x),\ x\in(0,1);\  \
\dis S'(0)=S'(1)=0,
\label{thm6-1}
\end{equation}
we deduce from the elliptic $L^1$-theory in \cite{BW} that, for any $p>1,$
$\|S \|_{W^{1,p}(0,1)}\leq C$, where $C$ is a positive constant independent of $d_I$ but allows to be different below.
Then for sufficiently large $p$,
the Sobolev embedding theorem guarantees that $\|S\|_{C^{\alpha}([0,1])}\leq C$ for some $\alpha\in(0,1)$.
Moreover, up to a sequence of $d_I\to0$,
say $d_n:=d_{I,n}\to 0$ with $d_n\to 0$ as $n\to\infty$,
the corresponding positive solution sequence $(S_n,I_n)$ of \eqref{model-ss} with $d_I=d_{n}$ satisfies $ S_n\rightarrow S_0>0$ in $C([0,1])$, as $n\to\infty$
due to \eqref{thm5-2-a}.

In light of \eqref{thm5-3}, by passing a subsequence of $d_n$ if necessary, we may assume that
 $\int_0^1 I_ndx\to I_0$, as $n\to\infty,$
for some nonnegative constant $I_0$.
To show $I_0>0$, we proceed indirectly and suppose that $I_0=0$.
By integrating \eqref{thm6-1} from $0$ to $x$, we have
 $$
 S'_n(x)=-\frac{1}{d_S}\int_0^x\{\Lambda(y)-S_n(y)-\beta(y)S_n(y)I_n(y)+\gamma(y)I_n(y)\}dy,\ \ \forall x\in[0,1].
 $$
By sending $n\to\infty$ and using $\int_0^1 I_ndx\to0$, it then follows
 $$
 S'_n(x)\to -\frac{1}{d_S}\int_0^x [\Lambda(y)-S_0(y)] dy\  \ \mbox{uniformly on}\ [0,1].
 $$
As $S_n(x)-S_n(0)=\int_0^xS_n'(y)dy$ for any $n\geq1$, we find that $S_0$ solves
 $$
 S_0(x)-S_0(0)=-\frac{1}{d_S}\int_0^x\left\{\int_0^y [\Lambda(z)-S_0(z)] dz\right\}dy,
 $$
which in turn implies that
 \begin{equation}
\dis -d_SS_0''(x)= \Lambda(x)-S_0(x),\ x\in(0,1);\  \
\dis S'_0(0)=0.
\label{thm6-1-a}
\end{equation}
If integrating \eqref{thm6-1} from $x$ to $1$, one can use the analysis similar as above to know that
$S_0'(1)=0$. Therefore, this and \eqref{thm6-1-a} give that $S_0=\tilde S$, that is, $S_n\to \tilde S$ uniformly on $[0,1]$ as $n\to\infty$.

On the other hand, observe that
 $$
 \lambda_1(d_n,\gamma(x)+\mu(x)-\beta(x) S_n(x))=0,\ \ \forall n\geq1,
 $$
where $\lambda_1(d_n,\gamma(x)+\mu(x)-\beta(x) S_n(x))$ stands for the principal eigenvalue of the following eigenvalue problem:
 \begin{equation}
 \displaystyle d_n\Delta u+\left[\beta(x) S_n(x)-\gamma(x)-\mu(x)\right]u+\lambda u=0,\ x\in(0,1);\ \
 \displaystyle u'(0)=u'(1)=0.
 \nonumber
 \end{equation}
Combined with the fact that the principal eigenvalue continuously depends on the parameters,
the argument as in \cite[Lemma 2.3]{Allen} yields
 $$
 0=\lambda_1(d_n,\gamma(x)+\mu(x)-\beta(x) S_n(x)) \to\min_{x\in[0,1]}\{\gamma(x)+\mu(x)-\beta(x)\tilde S(x)\},\ \ \mbox{as} \ n\to\infty,
 $$
contradicting our assumption $\min_{x\in[0,1]}\{\gamma(x)+\mu(x)-\beta(x)\tilde S(x)\}<0$. Thus, it is necessary that $I_0>0$.
The proof is complete.
\end{proof}

\section{Summary and Discussion}\label{dis-sec}
\subsection{Summary of analytical results}
In this paper, we are concerned with the SIS epidemic model \eqref{model} with mass action infection mechanism and linear source. To study the parabolic problem \eqref{model}, our first step is to establish the global existence and uniform boundedness of solutions. Then a basic reproduction number $\mathcal R_0$ is defined via a variational characterization, which determines the local stability of the unique DFE. When the environment is spatially homogeneous and the diffusion rates of the susceptible and infected are equal, by constructing suitable Lyapunov functionals, we further prove the global attractivity of the DFE for $\mathcal R_0 \leq 1$ and that of the EE for $\mathcal R_0>1$. We are mainly interested in the asymptotic behavior of positive steady states $(S,I)$ of problem \eqref{model}, which exist provided $\mathcal R_0>1$ in general heterogeneous environment, as the diffusion rates of the susceptible or the infected tends to zero. For fixed $d_I>0$, Theorem \ref{model-ss-ds} shows that the limiting functions of both $S$ and $I$ as $d_S\to 0$, are positive throughout the habitat. In the one dimensional interval, say $[0,1]$, for fixed $d_S>0$, Theorem \ref{model-ss-di} indicates that the limiting function of $S$ as $d_I\to 0$ is positive in $[0,1]$ while the total infected population tends to a positive constant.

Since there are four principle models \eqref{intro2}, \eqref{model}, \eqref{intro3} and \eqref{intro4} to model the SIS epidemic dynamics based on different infection mechanisms and modeling ideas, it will be helpful to summarize their results and make a comparison so as to understand the influence of the factors such as infection mechanism, movement rate and source term on the eradication of epidemics. Numerical simulations will be performed to validate theoretical results and to predict possible outcomes for those cases that remain unknown analytically. Then we discuss the implication of these theoretical and numerical findings from the disease control viewpoint. Since the results of the model \eqref{model} have been summarized above, below we shall briefly recall the results for the SIS models \eqref{intro2}, \eqref{intro3} and \eqref{intro4} obtained in literature.

\subsubsection{Results on \eqref{intro3}}
The steady state problem corresponding to \eqref{intro3} satisfies
 \begin{equation}
 \left\{ \begin{array}{llll}
  -d_S\Delta S=-\beta(x)\frac{SI}{S+I}+\gamma(x)I,&x\in\Omega,\\
  -d_I\Delta I=\beta(x)\frac{SI}{S+I}-\gamma(x)I,&x\in\Omega,\\
  \frac{\partial S}{\partial \nu}=\frac{\partial I}{\partial \nu}=0,&x\in\partial\Omega,\\
  \int_\Omega [S(x)+I(x)]dx=N.
 \end{array}\right.
 \label{AllenSIS-SS}
 \end{equation}
Hereafter, $N$ is a fixed positive constant, representing the total number of the susceptible and infected populations. That is $N=\int_\Omega(S(x)+I(x))dx$ is a constant.

As in \cite{Allen, Peng-Yi}, we introduce the notion of low/high/moderate risk site/domain. We say that $x$ is a {\it low (or high or moderate) risk site} if the local disease transmission rate $\beta(x)$ is lower (or higher or equal to) than the local disease recovery rate $\gamma(x)$. Let
$$H^- = \{x\in\Omega: \beta(x)<\gamma(x)\}\quad \mbox{and}\quad H^+=\{x\in\Omega: \beta(x)>\gamma(x)\}$$
denote the set of low-risk sites and high-risk sites, respectively.

Assume that both $H^-$ and $H^+$ are nonempty. The authors in \cite{Allen} defined the basic reproduction number
\begin{equation*}
\hat{\mathcal R}_0 = \sup_{0\neq \varphi \in H^1(\Omega)} \frac{\int_\Omega \beta \varphi^2dx}{\int_\Omega (d_I |\nabla\varphi|^2 + \gamma \varphi^2)dx}
\label{r0-old}
\end{equation*}
and showed that the unique DFE $(N/|\Omega|,0)$ is globally stable if $\hat{\mathcal R}_0<1$, while it is unstable and  a unique EE exists if $\hat{\mathcal R}_0>1$. Indeed, following the argument similar to \cite{Cui-Lam-Lou}, one can show that the uniform persistence property holds once $\hat{\mathcal R}_0>1$.

The asymptotic profile of the EE was also investigated in \cite{Allen} when the diffusivity of the susceptible individuals tends to zero. In particular, the result of \cite{Allen} shows that
\begin{itemize}
\item As $d_S\to0$, the unique positive solution $(S,I)$ (which exists if $\hat{\mathcal R}_0>1$) of \eqref{AllenSIS-SS} fulfills
 $(S,I) \to (\hat S,0)$ uniformly on $\overline\Omega$, where $\hat S$ satisfies a free boundary problem, is positive at all low-risk sites and is also positive at some (but not all) high-risk sites.
\end{itemize}
This result indicates that it may be possible to entirely eliminate the infectious disease by restricting the motility rate of the susceptible to be small.

Further asymptotics of the EE in other cases were obtained by Peng \cite{Peng} wherein it was shown that if $d_I\to 0$ and $d:=d_I /d_S \to d_0 \in [0,\infty]$, then the unique positive solution $(S,I)$ of \eqref{AllenSIS-SS} satisfies the following:
\begin{itemize}
\item If $d_0 = 0$, then
$$S \to \frac{N}{\int_\Omega \left[1+(\beta - \gamma)_+ \gamma^{-1} \right]}\quad \mbox{and}\quad I \to \frac{N (\beta - \gamma)_+ \gamma^{-1}}{\int_\Omega \left[1+(\beta - \gamma)_+ \gamma^{-1} \right]}$$
uniformly on $\overline\Omega$. In what follows, $(s)_+ = \max\{s,0\}$.
\item If $d_0\in (0,\infty)$, then
$$S \to \frac{N d_0 \left[1-A(d_0;x)\right]} {\int_\Omega \left[A(d_0;x)+d_0 (1-A(d_0;x)) \right]},\ \ I \to \frac{N A(d_0;x) } {\int_\Omega \left[A(d_0;x)+d_0 (1-A(d_0;x)) \right]}$$
uniformly on $\overline\Omega$,
where $A(d_0;x)=\frac{d_0 (\beta - \gamma)_+}{d_0(\beta - \gamma) + \gamma}$.

\item If $d_0 = \infty$, then $I\to 0$ uniformly on $\overline\Omega$, and
$S\to \frac{N[1- A(\infty;x)]}{\int_\Omega [1-A(\infty;x)]}$
uniformly on any compact subset of $H^-$ and $H^+$ respectively, where
$A(\infty;x)=\left\{ \begin{array}{ll}
0,&\mbox{if }x\in\overline{H^-};\\
1,&\mbox{if } x\in H^+.
\end{array}\right.$
\end{itemize}
Clearly the limiting function of $I$ when $d_I\to 0$ and $d\to d_0 \in [0,\infty)$ is positive on $H^+$ while zero on $\overline{H^-}$. In particular, if $d_I\to 0$ and $d_S>0$ is fixed, we are in the first scenario above. Thus, for model \eqref{intro3}, we may conclude that the optimal strategy of eliminating the infectious disease is to restrict the motility rate of the susceptible population, while restricting the motility of infected population can only eradicate the disease in low-risk and moderate-risk sites. Of course, another strategy is to set $d_I\to 0$ and $d_S\to 0$ while the susceptible moves relatively slower than the infected.


\subsubsection{Result on \eqref{intro4}} Now we consider the scenario that the susceptible individuals are allowed to
have birth and death, and look at  the SIS reaction-diffusion system \eqref{intro4} with
a linear external source. One of the main results in \cite{LiPengWang} states that \eqref{intro4} admits at least one EE $(S,I)$ if $\hat{\mathcal R}_0>1$, which is in fact a positive steady state of \eqref{intro4} satisfying
 \begin{equation}
 \left\{ \begin{array}{llll}
   -d_S\Delta S=\Lambda(x)-S-\beta(x)\frac{SI}{S+I}+\gamma(x)I,&x\in\Omega,\\
   -d_I\Delta I=\beta(x)\frac{SI}{S+I}-\gamma(x)I,&x\in\Omega,\\
   \frac{\partial S}{\partial \nu}=\frac{\partial I}{\partial \nu}=0,&x\in\partial\Omega.
 \end{array}\right.
 \label{LPW-SS}
 \end{equation}
Moreover, it was proved in \cite{LiPengWang} that
\begin{itemize}
\item As $d_S\to 0$, both limiting functions of $S$ and $I$ are inhomogeneous and positive on the entire habitat $\overline\Omega$;

\item As $d_I\to 0$, the limiting function of $S$ is positive on the entire habitat $\overline\Omega$ and that of $I$ is positive only on high-risk sites.
\end{itemize}

\subsubsection{Result on \eqref{intro2}} In \cite{Deng-Wu, Wu-Zou, LiBo1}, the authors treated the SIS system \eqref{intro2} with mass action and its steady state problem:
 \begin{equation}
 \left\{ \begin{array}{llll}
   -d_S\Delta S=-\beta(x)SI+\gamma(x)I,&x\in\Omega,\\
   -d_I\Delta I=\beta(x)SI-\gamma(x)I,&x\in\Omega,\\
   \frac{\partial S}{\partial \nu}=\frac{\partial I}{\partial \nu}=0,&x\in\partial\Omega,\\
  \int_\Omega [S(x)+I(x)]dx=N.
 \end{array}\right.
 \label{Deng-Wu-SS}
 \end{equation}
For the mass action system \eqref{intro2}, the basic reproduction number depends on the total population size $N$ and is defined as
$$\tilde{\mathcal R}_0 = \sup_{0\neq \varphi\in H^1(\Omega)}\frac{(N/|\Omega|)\int_\Omega \beta \varphi^2}{\int_\Omega (d_I|\nabla \varphi|^2 +\gamma \varphi^2)} = \frac{N}{|\Omega|} \hat{\mathcal R}_0.$$
It is shown that a positive solution $(S,I)$ of \eqref{Deng-Wu-SS} exists whenever $\tilde{\mathcal R}_0>1$. Indeed, following the argument similar to \cite{Cui-Lam-Lou}, one can show the uniform persistence property holds once $\tilde{\mathcal R}_0>1$. Moreover, one can show that
$\tilde{\mathcal R}_0>1$ when $N> \int_\Omega \frac{\gamma(x)}{\beta(x)}dx$, and $\tilde{\mathcal R}_0>1$ is also possible when $N\leq\int_\Omega \frac{\gamma(x)}{\beta(x)}dx$ depending on the parameters $\beta,\,\gamma$ and $d_I$.
Furthermore, for fixed $d_I>0$, the following asymptotics as $d_S\to 0$ have been shown in \cite{LiBo1,Wu-Zou}:

\begin{itemize}
\item If either $N-\int_\Omega \frac{\gamma}{\beta}>\frac{1}{4}\int_\Omega \frac{|\nabla \beta|^2}{\beta^3}$ or $\frac{N}{|\Omega|}>\frac{\gamma}{\beta}$ on $\overline\Omega$, then
     $$
     (S,I) \to\left(\frac{\gamma(x)}{\beta(x)},\frac{N}{|\Omega|} -\frac{1}{|\Omega|} \int_\Omega \frac{\gamma(x)}{\beta(x)}dx \right)\ \ \ \mbox{uniformly on}\ \overline\Omega;
     $$

\item If $N\leq\int_\Omega \frac{\gamma(x)}{\beta(x)}dx$, then $(S,I) \to(S_*,0)$ uniformly on $\overline\Omega$, where $S_*$ is a positive function.

\end{itemize}
Under the assumption that $\Omega^+= \big\{ x\in\Omega:\frac{N}{|\Omega|}\beta(x)-\gamma(x)>0\big\}$ is nonempty, Wu and Zou \cite{Wu-Zou} further proved the following:

\begin{itemize}
\item If $d_I\to 0$ and $d_I/d_S\to d\in(0,\infty)$, then $(S,I) \to (S_{**},I_{**})$ uniformly on $\overline\Omega$ and $I_{**}$ is the unique nonnegative solution of
 $$
 \left\{ \frac{N}{|\Omega|}\beta - \gamma -\frac{(1-d)\beta}{|\Omega|}\int_\Omega I_{**} \right\}_+ - d \beta I_{**}=0,
 $$
and
 $$
 S_{**}= \frac{N}{|\Omega|} -\frac{(1-d)}{|\Omega|}\int_\Omega I_{**} -d I_{**}.
 $$

\end{itemize}
Therefore, the distribution of $I_{**}$ depends critically on the magnitude of $d$. In fact, if $d\in (0,1)$, then  $\{x\in\Omega:I_{**}(x)>0\}$ is a proper subset of $\Omega^+$; if $d\in(1,\infty)$, then $\Omega^+$ is a subset of $\{x\in\Omega:I_{**}(x)>0\}$; if $d=1$, then
 $$
 S_{**} = \frac{N}{|\Omega|} - \left(\frac{N}{|\Omega|}-\frac{\gamma}{\beta}\right)_+ \quad \mbox{and}\quad I_{**} = \left(\frac{N}{|\Omega|}-\frac{\gamma}{\beta}\right)_+.
 $$

On the other hand, in the case of one-dimensional domain, say $\Omega=(0,1)$, if $\gamma < N \beta$ on $[0,1]$, then for fixed $d_S>0$,
as $d_I\to 0$, the authors of \cite{LiBo1} proved that
\begin{itemize}
\item
Any EE $(S,I)$ satisfies $S\to \hat S$ uniformly on $[0,1]$ with a positive function $\hat S$ and $\int_0^1Idx$ converges to a positive constant.
\end{itemize}
Biologically, this implies that the infectious disease still persists when the movement of the infected population is small.

\subsection{Discussion and conclusions}
\subsubsection{Comparison of the basic reproduction number}
For readability, hereafter we call models \eqref{intro2}, \eqref{model}, \eqref{intro3}  and \eqref{intro4} and their corresponding EE problem (when no confusion is caused) as MO, MW, SO and SW, respectively, in order that each label of models can bear a meaning (see Table 1). For convenience, we also list the basic reproduction number for each of models MO, MW, SO, SW in Table \ref{t1}, where three observations are worth mentioning as follows.

(a) MO is the only one whose basic reproduction number depends on $N$ via $N/|\Omega|$ which measures the number of population per unit space. This implies that the total population plays a role in the eradication of diseases only for MO, and  also explains why a disease is easier to become endemic in a more crowded population than a sparse population as mentioned in \cite{Wu-Zou}. (b) If the birth-death effect is considered, then MO becomes MW whose basic reproduction number no longer depends on total population $N$. This indicates that the birth and death effects could be an important factor for the eradication of diseases in SIS models with mass-action infection mechanism. However, the birth-death effect is not important for models SO and SW any more, since both have the same basic reproduction number. (c) MW is the only model whose basic reproduction number depends (implicitly) on the diffusivity $d_S$ of the susceptibles.
\begin{table}[htbp]\label{t0}
\begin{center}
\begin{tabular}{|l||l||l|}
\hline
Model  &Infection Mechanisms &Basic Reproduction Number\\
\hline
MO=\eqref{intro2} &Mass-action incidence without birth-death &$\tilde{\mathcal R}_0 = \frac{N}{|\Omega|} \sup_{0\neq \varphi\in H^1(\Omega)}\frac{\int_\Omega \beta \varphi^2}{\int_\Omega (d_I|\nabla \varphi|^2 +\gamma \varphi^2)}$\\
\hline
MW=\eqref{model} &Mass-action incidence with birth-death &${\mathcal R}_0 = \sup_{0\neq \varphi\in H^1(\Omega)}\frac{\int_\Omega \beta \tilde{S}\varphi^2}{\int_\Omega (d_I|\nabla \varphi|^2 +(\gamma+\mu) \varphi^2)}$\\
\hline
SO\,=\,\eqref{intro3} &Standard incidence without birth-death  &$\hat{\mathcal R}_0 = \sup_{0\neq \varphi\in H^1(\Omega)}\frac{\int_\Omega \beta \varphi^2}{\int_\Omega (d_I|\nabla \varphi|^2 +\gamma \varphi^2)}$\\
\hline
SW=\eqref{intro4}  &Standard incidence with birth-death &$\hat{\mathcal R}_0 = \sup_{0\neq \varphi\in H^1(\Omega)}\frac{\int_\Omega \beta \varphi^2}{\int_\Omega (d_I|\nabla \varphi|^2 +\gamma \varphi^2)}$ \\
\hline
\end{tabular}
\end{center}
\caption{\small Basic reproduction numbers for SIS epidemic models, where $\tilde{S}$ in the basic reproduction number for MW is the unique solution of \eqref{dfe}.}
\label{t1}
\end{table}
\subsubsection{Asymptotic behavior of EE}
From the disease control point of view, one is mainly concerned with whether the infectious disease can be eradicated (namely whether $I(x)$ can go extinction either throughout the entire domain $\overline\Omega$ or partially). One of the strategies as recalled above is to control the motility of susceptible and/or infected populations. Below in Table \ref{t2} and Table \ref{t3} we capsulize the asymptotic behavior of EE $(S(x), I(x))$ as $d_S\to0$ or $d_I \to 0$ or both.  Furthermore we use numerical simulations to illustrate known results and predict possible outcomes for unknown cases. In the following, we shall use $(S^*,I^*)$ to represent the asymptotic behavior of EE for all models for simplicity.
We remark that the parameter values chosen in all simulations are sufficient to guarantee the existence of EE in models under consideration. For example, in Fig.\ref{fig1}, for any $d_S>0$ and $d_I>0$, $\hat {\mathcal R}_0=\tilde {\mathcal R}_0>\int_0^1 \beta(x)dx / \int_0^1 \gamma(x)dx = 1.5 / 1.2 >1$ and $\mathcal R_0> \int_0^1 \beta(x)\tilde S(x)dx / \int_0^1 [\gamma(x)+\mu(x)]dx = 4.5/2.2>1$.
\begin{table}[htb]
\begin{center}
\begin{tabular}{|l||p{6cm}||p{6cm}| }
\hline
Model  & Limit of $(S(x), I(x))$ as $d_S \to 0$ & Limit of $(S(x), I(x))$ as $d_I \to 0$\\[1.5mm]
\hline
MO & $S^*(x)>0$ and $I^*(x) \equiv 0$ (or $>0$) for small (or large) $N$& \multirow{2}{*}{$S^*(x)>0$ and $\int_\Omega I^*(x)>0$}\\[1.5mm]
\hline
MW & $S^*(x)>0$ and $I^*(x)>0$ &$S^*(x)>0$ and $\int_\Omega I^*(x)>0$ \\[1.5mm]
\hline
SO & $S^*(x)\geq 0$ and $I^*(x) \equiv 0$ & $S^*(x) > 0$ and $I^*(x)\equiv0$ iff $x \in \overline{H^-}$\\[1.5mm]
\hline
SW   & $S^*(x)>0$ and $I^*(x)>0$& $S^*(x)>0$ and $I^*(x)\equiv0$ iff $x \in \overline{H^-}$\\
\hline
\end{tabular}
\end{center}
\caption{\small Asymptotic behavior of $(S(x), I(x))$ as $d_S \to 0$  or $d_I \to 0$.}
\label{t2}
\end{table}
\begin{table}[!tbp]
\begin{center}
\begin{tabular}{|c||p{10cm}|}
\hline
Model  & Limit of $(S(x), I(x))$ as $d_S \to 0$ and $d_I \to 0$\\
\hline
MO & For the case $d_I/d_S\to d\in(0,\infty)$,  $S^*(x)>0$ and  $I^*(x) \geq 0$ but $I^*(x) \not\equiv 0$\\
\hline
MW & Unknown\\
\hline
SO & $S^*(x)> 0$ and $I^*(x)\equiv0$ iff $x \in \overline{H^-}$ when $d_I/d_S \to [0,\infty)$, $S^*(x)\geq 0$ and $I^*(x)\equiv 0 $ when $d_I/d_S\to\infty$\\
\hline
SW & Unknown\\
\hline
\end{tabular}
\end{center}
\caption{\small Asymptotic behavior of $(S(x), I(x))$ as both $d_S \to 0$ and $d_I \to 0$.}
\label{t3}
\end{table}

When the movement rate $d_S$ of the susceptibles tends to zero, the asymptotics of solutions have been well understood to a large extent as seen in Table \ref{t2}, and the asymptotic profiles of EE illustrated in Fig.\ref{fig1} are consistent with analytical results. It is worth mentioning that for MO, since the parameter values are taken so that $1=N<\int_0^1 \frac{\gamma(x)}{\beta(x)}dx$, we have the convergence $I\to 0$ as $d_S \to 0$ according to the results of \cite{Wu-Zou} which our numerical simulations fit well.

With the same parameter values as in Fig.\ref{fig1}, we illustrate the asymptotic profiles of EE as $d_I \to 0$ in Fig.\ref{fig2}. For the two standard incidence infection models SO and SW, our simulations show that the limiting profile of $I$ for both models is positive only at high-risk sites which match well with the analytical results. The limiting profile of $S$ for model SW is constant because of the special choice of $\Lambda$ (see \cite[Theorem 5.2]{LiPengWang}). For models MW and MO, the exact limiting behavior of $I(x)$ remains open except knowing that its total population is positive (see Table \ref{t2}). Our numerical simulations in Fig.\ref{fig2} demonstrate that the infectious disease tends to aggregate in a narrow region and is eradicated outside this region, where model MO has a narrower aggregation region than model MW.
{\color{black} We remark that in our simulation the condition $\gamma<N\beta$ required in \cite{LiBo1} is not satisfied on $[0,1]$, and we observe that $S$ tends to a positive constant though its rigorous proof still remains open.}

The asymptotic behavior of EE as $d_S \to0$ and $d_I \to 0$ is only partially understood (see results in Table \ref{t3}). The numerical simulations shown in Fig.\ref{fig3} verify the known results on models SO and MO where the asymptotic profiles of $(S,I)$ coincide because of our choice of the parameter values.
However, the asymptotic behavior of EE as $d_S \to0$ and $d_I \to 0$  for models MW and SW entirely remains open and our numerical simulations have the following predictions. Firstly, for MW, the simulation implies that $S^*(x)>0$ and $I^*(x)\geq 0$ but $I^*(x)\not \equiv 0$ as  $d_S \to0$ and $d_I \to 0$ with $d_I/d_S \to d \in (0, \infty)$, which is analogous to the asymptotic behavior of EE for MO. In other words, the birth-death effect seems to be not important for SIS models with mass-action infection mechanisms if both diffusion rates of the susceptible and infectious are small with the same order. Secondly, for model SW, the numerical simulation shows that $S^*(x)$ is a positive constant and $I^*(x)\geq 0$ where $I^*(x) \equiv 0$ if and only if $x \in \overline{H^-}$. These simulations suggest possible asymptotic behavior of models MW and SW as $d_S \to0$ and $d_I \to 0$ for further analytical pursues.

Finally, to see whether the inclusion of a moderate-risk region will affect the asymptotic profiles of EE as considered in \cite{Peng-Yi}, we choose appropriate functions for $\beta(x)$ and $\gamma(x)$ as
\begin{equation}\label{moderate}
\beta(x)=
\begin{cases}
1, & x\in [0,0.75]\\
2x-0.5,& x\in [0.75,1]
\end{cases},\
\gamma(x)=
\begin{cases}
-2x+1.5, & x\in [0,0.25]\\
1,& x\in [0.25,1]\\
\end{cases}
\end{equation}
 such that $\beta(x)= \gamma(x)$ on the interval $[0.25,0.75]$ (moderate-risk region), see a plot in Fig.\ref{fig0}(b), and perform numerical simulations with small $d_I$. For model
MW, Fig.\ref{fig4}(a) indicates that the infected population tends to aggregate on two narrow regions instead of one, compared to the case without a moderate-risk region as illustrated in Fig.\ref{fig2}.
Moreover, the simulation in Fig.\ref{fig4}(b) illustrates that the limiting profile of $I$ of SO, SW and MO
 is positive only at high-risk sites. This is in sharp contrast with Fig.\ref{fig2} where there is no moderate-risk region and the limiting profile of $I$ for model MO is positive only on a narrow part within the high-risk region.
\begin{figure}[htb]
\centering
\includegraphics[width=6cm]{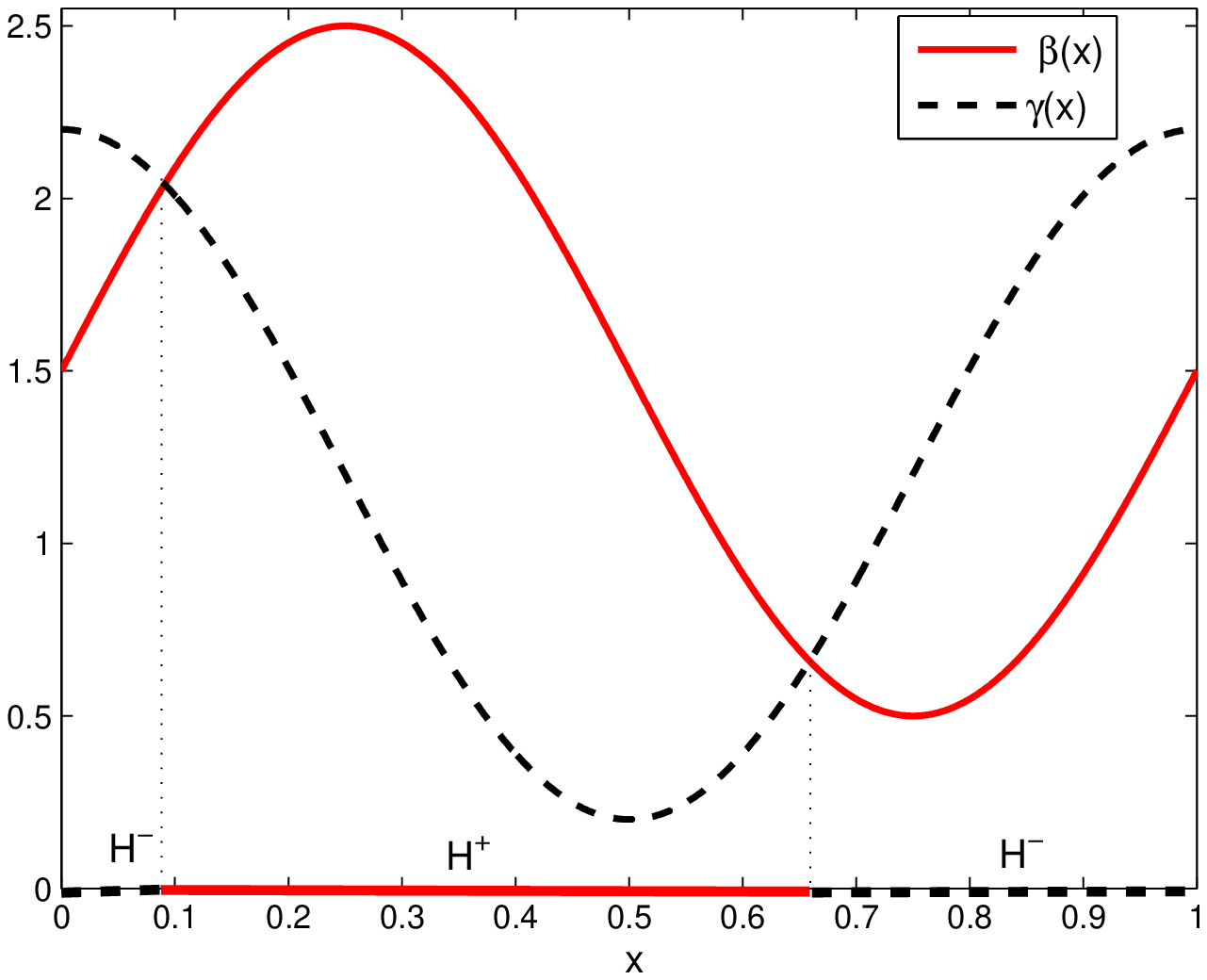}
\includegraphics[width=6cm]{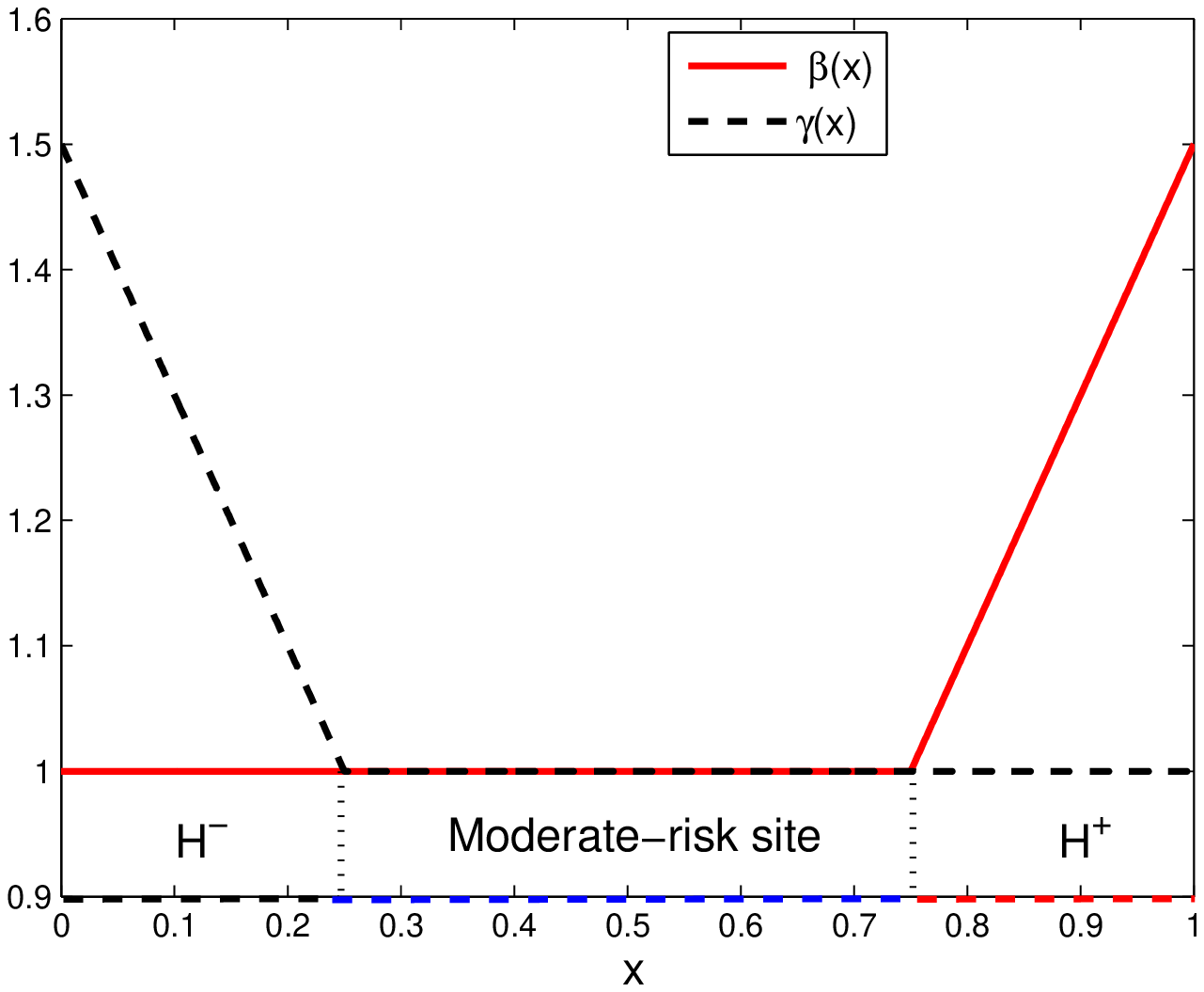}

(a) \hspace{5cm} (b)
\caption{\small (a) Graphs of $\beta(x)=1.5+\sin(2\pi x)$, $\gamma(x)=1.2+\cos(2\pi x)$ for $x\in [0,1]$, and the set $H^-$ and $H^+$ (reproduction of Fig.1 in \cite{Allen}); (b) Graphs of $\beta(x)$ and $\gamma(x)$ given by (\ref{moderate}) with a moderate-risk region in $[0,1]$.}
\label{fig0}
\end{figure}

\begin{figure}[htb]
\centering
\includegraphics[width=6cm, height=4.5cm]{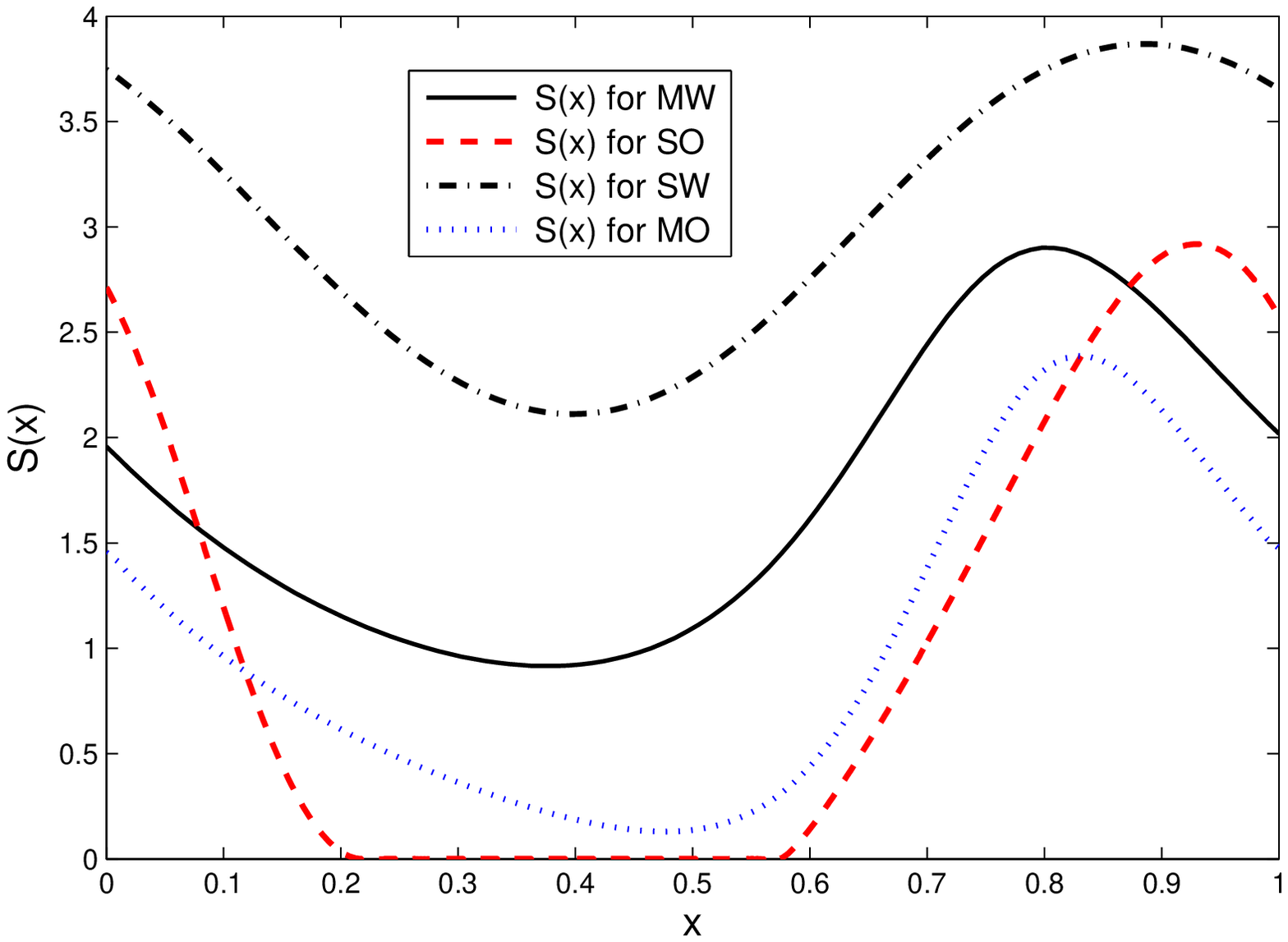}
\includegraphics[width=6cm, height=4.5cm]{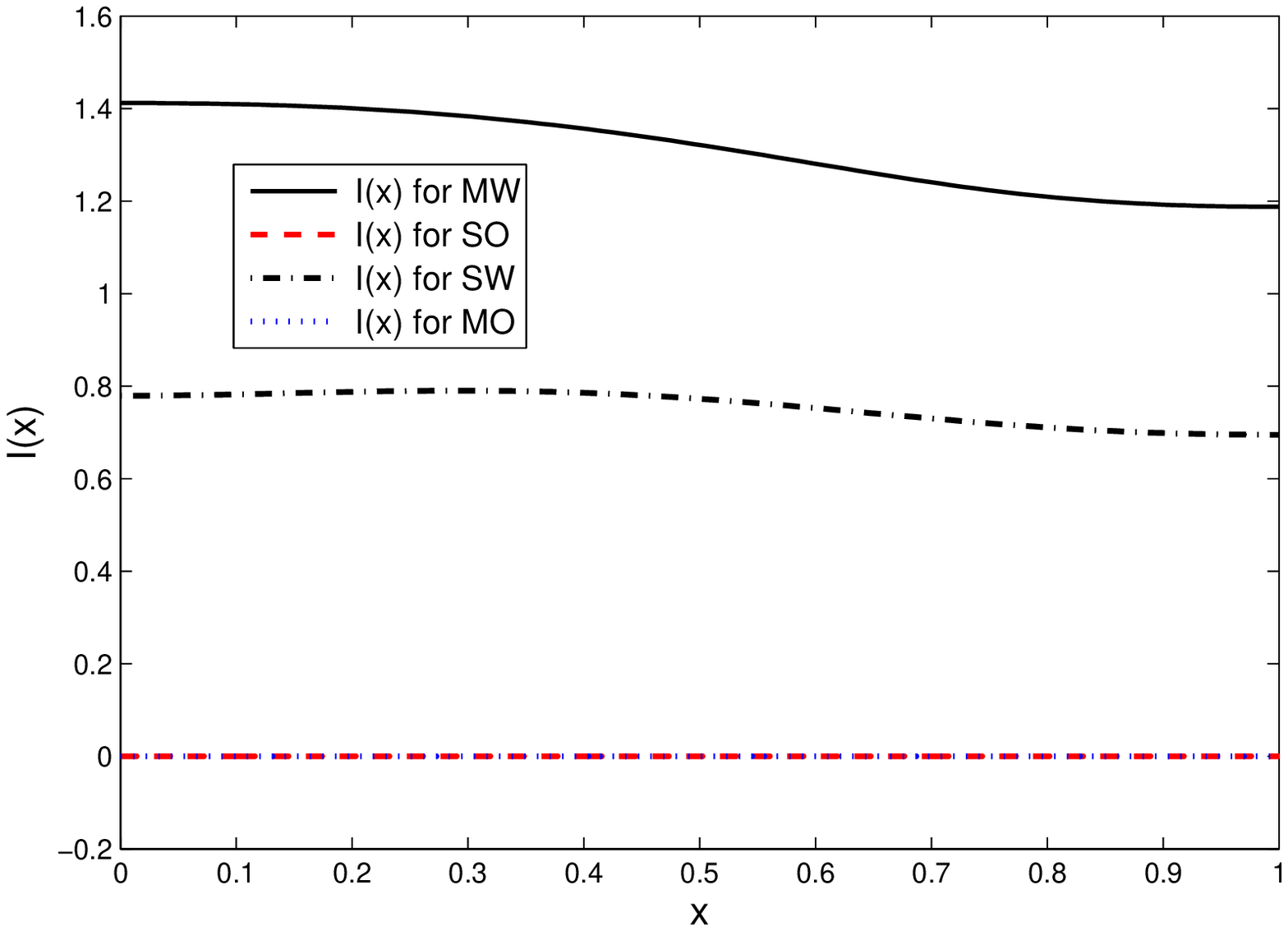}
\caption{\small Numerical simulations of the profile of $(S(x), I(x))$ as $d_S \to 0$ for systems MO, MW, SO and SW, where parameters are chosen as: $d_S=10^{-6}$, $d_I=1$, $\Lambda(x)=3$, $\mu(x)=0.5+x$ and $\beta(x)$ and $\gamma(x)$ are as plotted in Fig.\ref{fig0}(a). }
\label{fig1}
\end{figure}

\begin{figure}[htb]
\centering
\includegraphics[width=6cm, height=4.5cm]{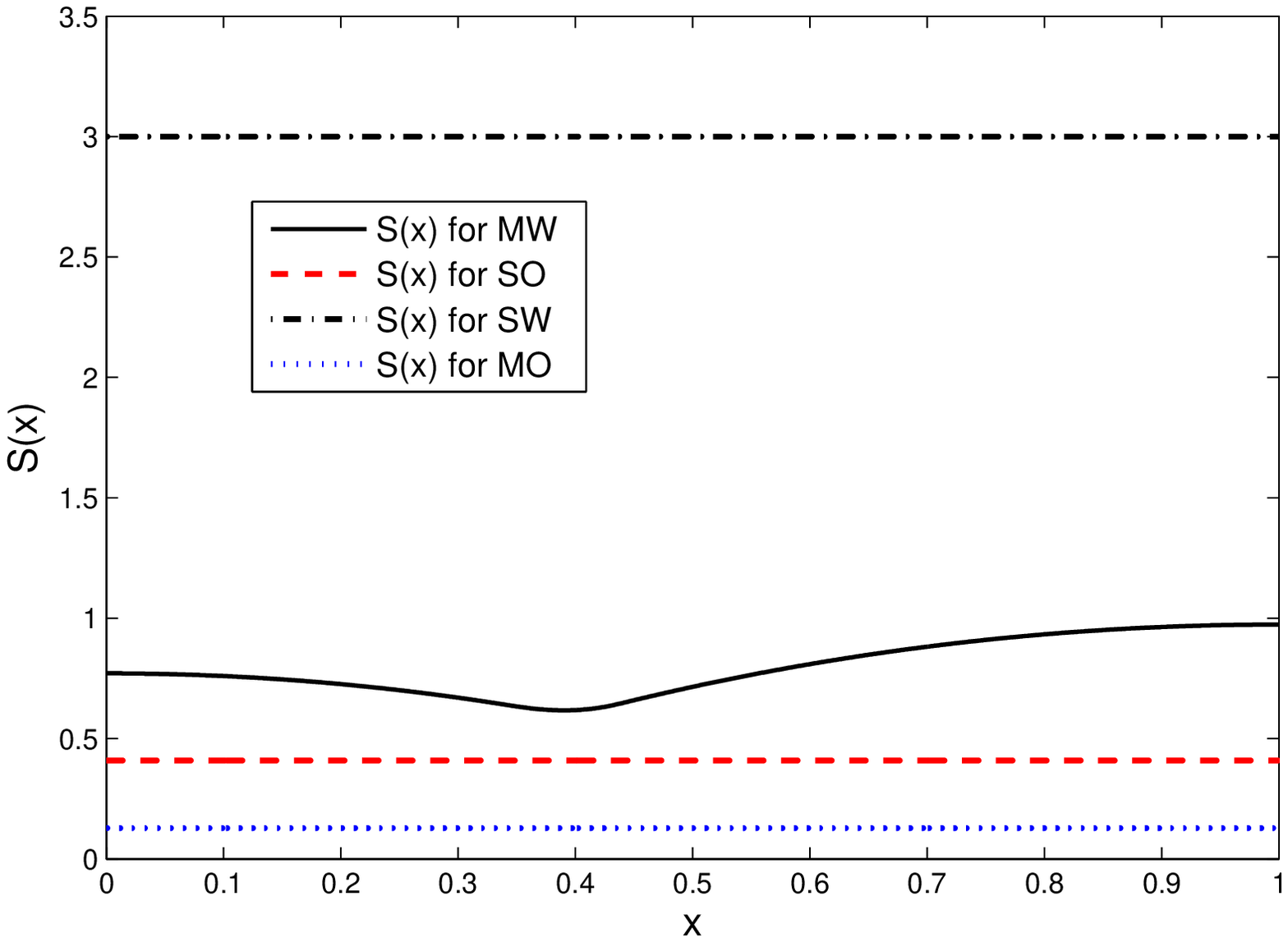}
\includegraphics[width=6cm, height=4.5cm]{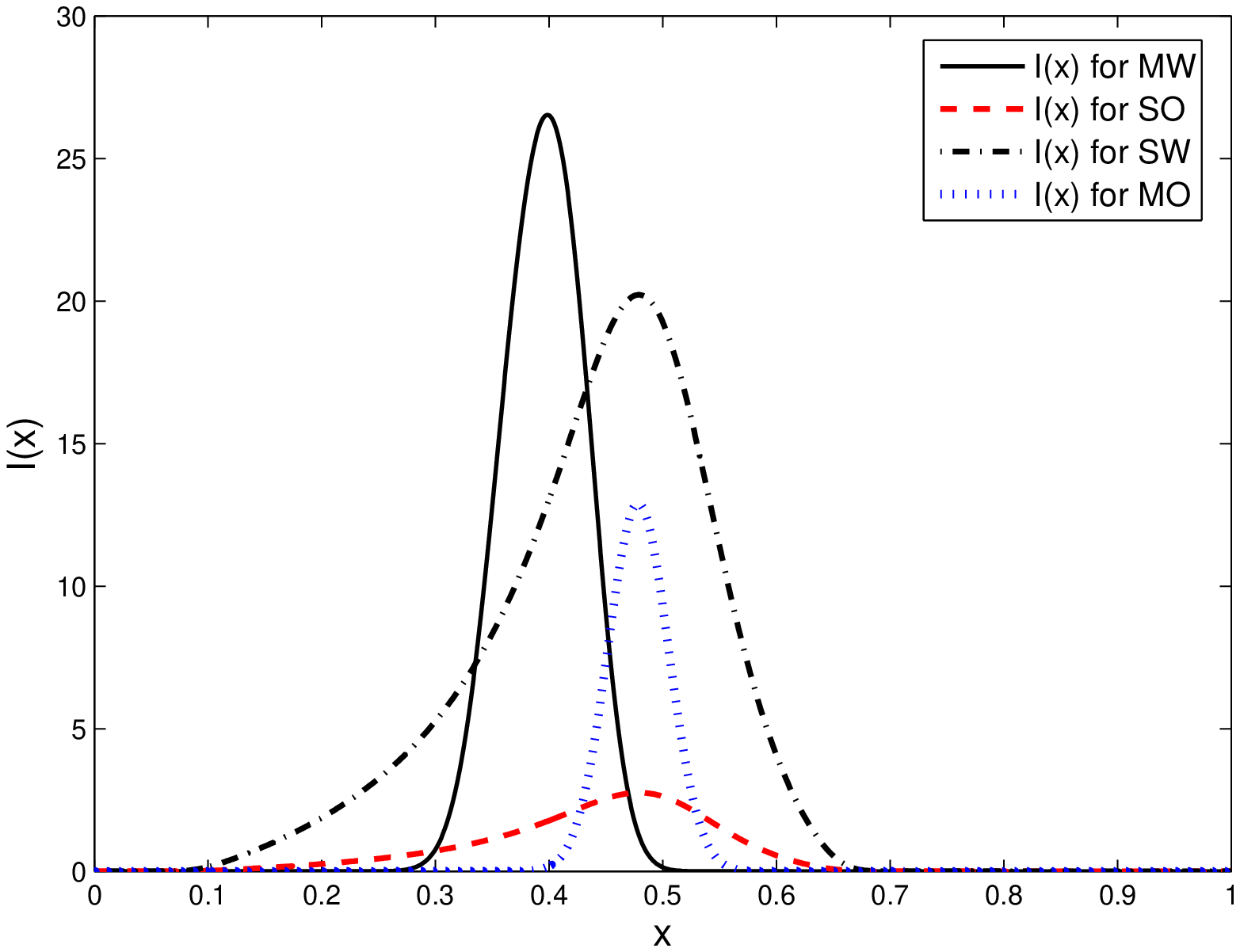}
\caption{\small Numerical simulations of the profile of $(S(x), I(x))$ as $d_I \to 0$ for systems MO, MW, SO and SW, where $d_S=1, d_I=10^{-5}$ and other parameters are chosen same as those in Fig.\ref{fig1}.}
\label{fig2}
\end{figure}

\begin{figure}[htb]
\centering
\includegraphics[width=6cm, height=4.5cm]{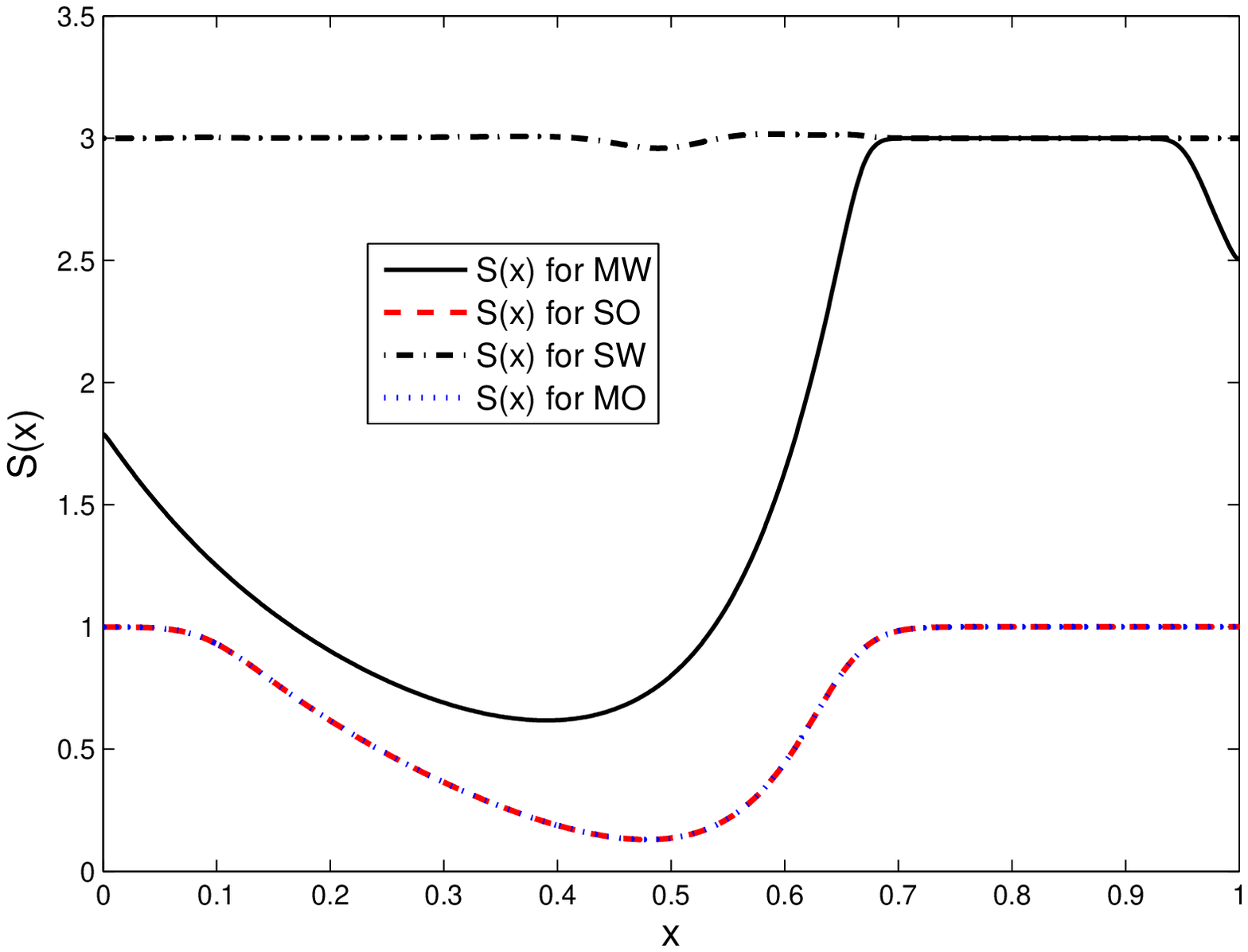}
\includegraphics[width=6cm, height=4.5cm]{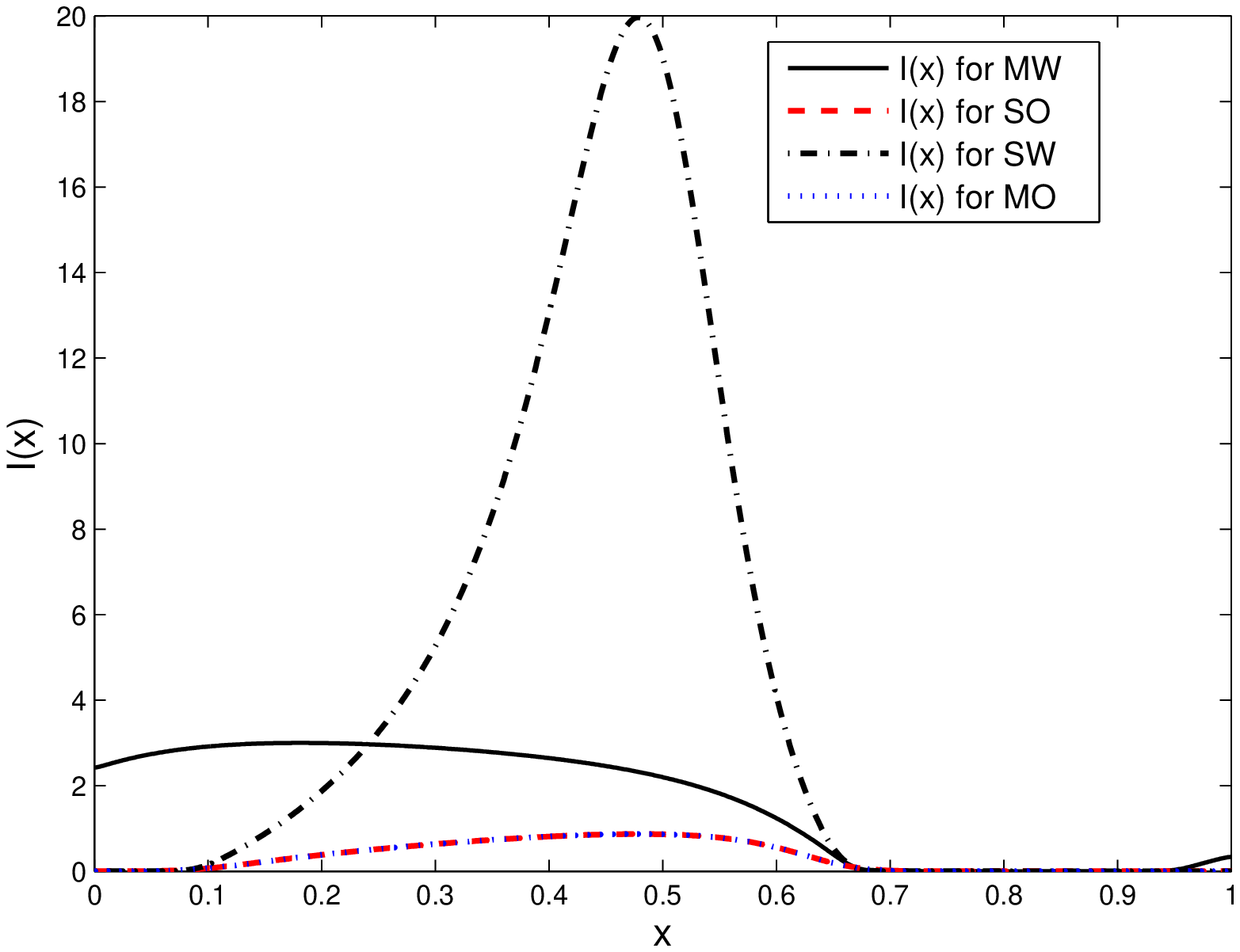}
\caption{\small Numerical simulations of the profile of $(S(x), I(x))$ as $d_S, d_I \to 0$ for systems MO, MW, SO and SW, where  $d_S=d_I=10^{-5}$ and other parameters are chosen same as those in Fig.\ref{fig1}.}
\label{fig3}
\end{figure}

\begin{figure}[htb]
\centering
\includegraphics[width=6cm,height=4.5cm]{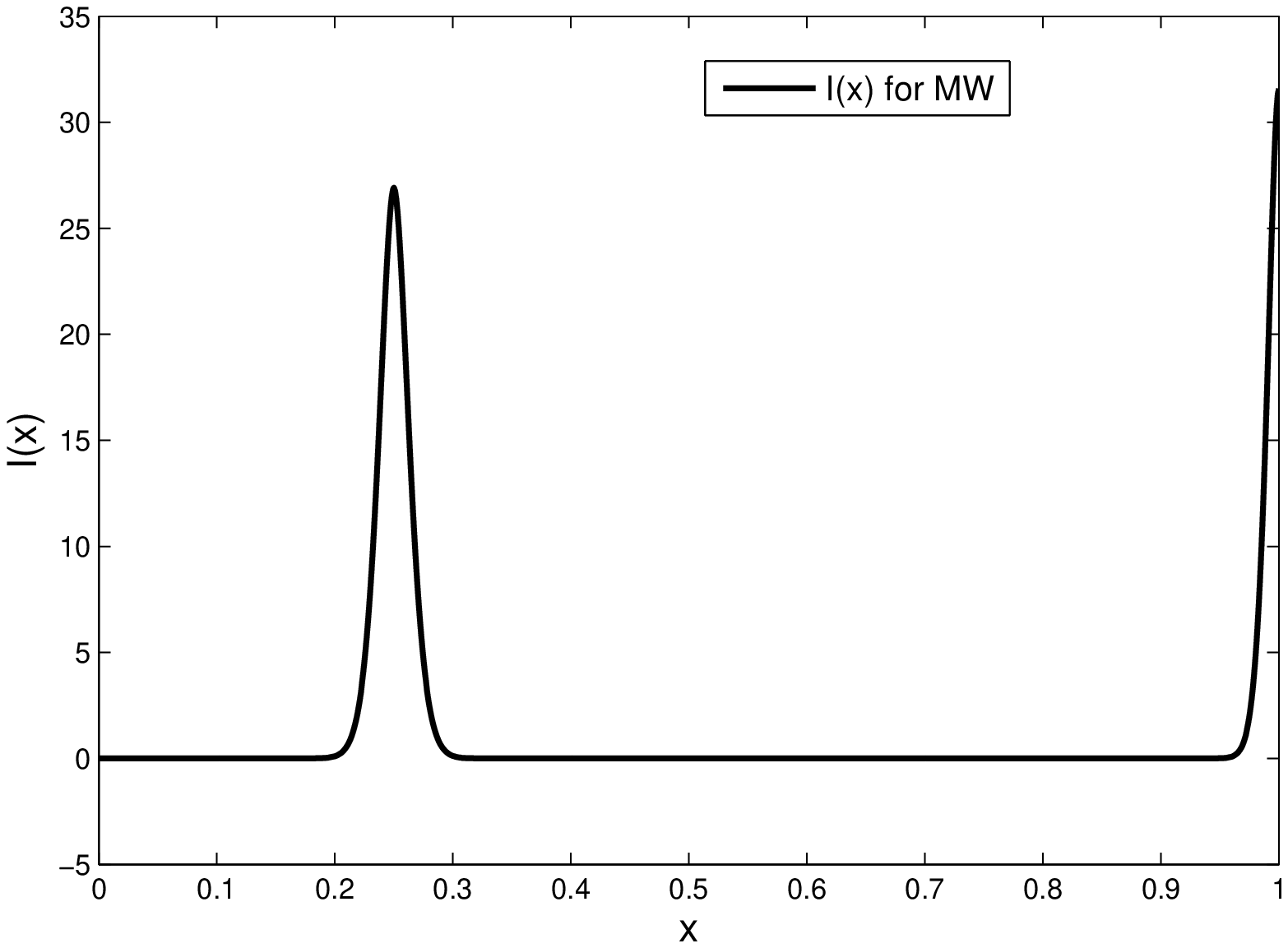}
\includegraphics[width=6cm,height=4.5cm]{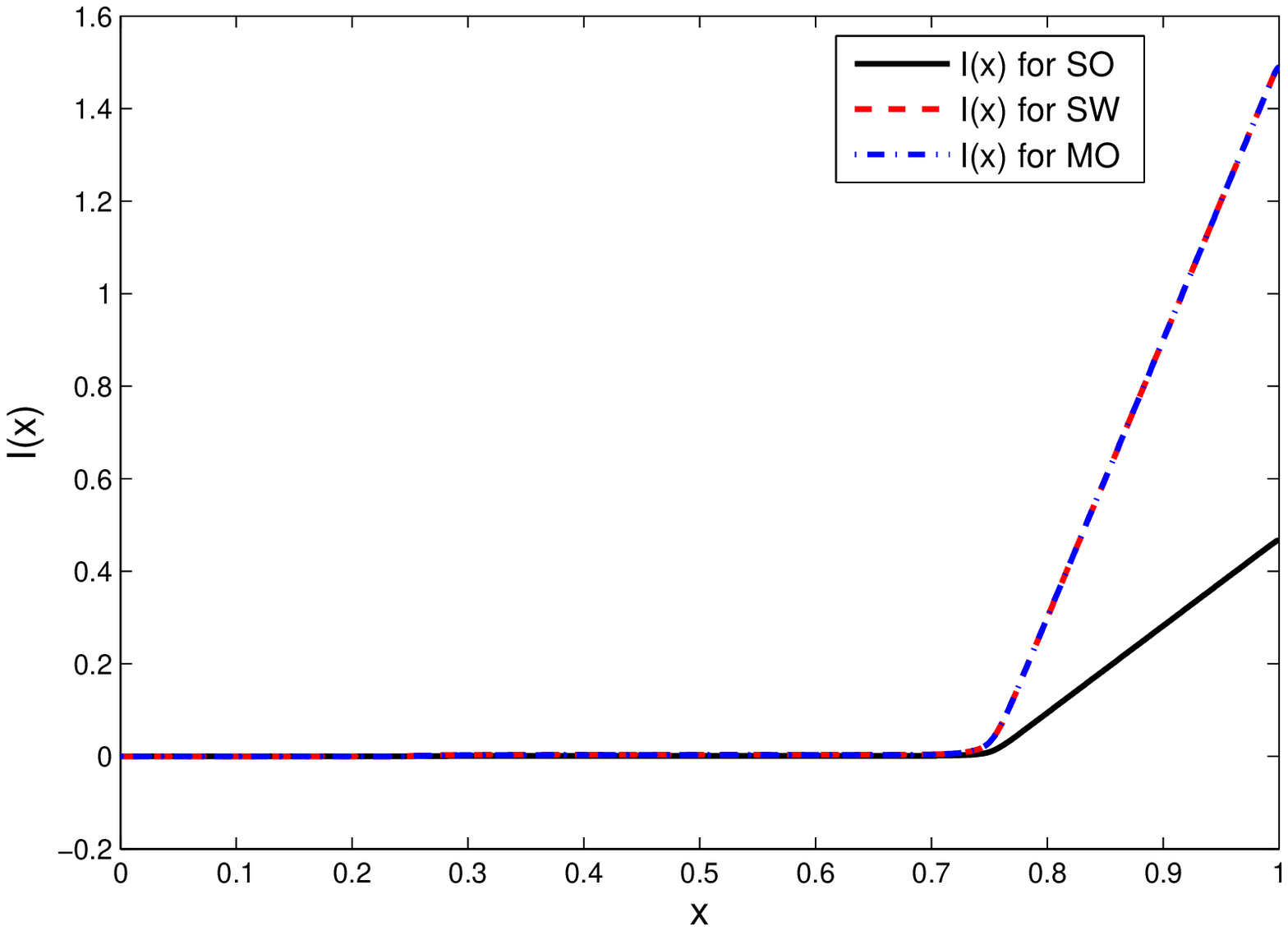}

(a) \hspace{5.5cm} (b)
\caption{\small Numerical simulations of the asymptotic profile of $I(x)$ as $d_I \to 0$ for systems MO, MW, SO and SW with a moderate-risk site, where $d_S=1,d_I=10^{-5}$, $\Lambda(x)=3$, $\mu(x)=0.5+x$, $\beta(x)$ and $\gamma(x)$ are given by (\ref{moderate}) as plotted in Fig.\ref{fig0}(b).}
\label{fig4}
\end{figure}

\subsubsection{Implication on disease control}
We now discuss numerous implications/comments on disease control based on analytical and numerical results summarized in the preceding subsections.

First consider models SO and MO which have conserved total population but subject to different infection mechanism. For model SO with any magnitude of total population, it is possible to eliminate the disease entirely by restricting $d_S$ while the disease cannot be eradicated on high-risk sites by limiting $d_I$ (see Table 2, Fig.\ref{fig1} and Fig.\ref{fig2}). As for model MO, restricting $d_S$ can eliminate the disease only if the total population is small (see Table \ref{t2}), whereas the infected individuals tend to aggregate on a narrow region if $d_I$ is small by the observation from  Fig.\ref{fig2}. Thus, if the total number of population remains unchanged, we may conclude that the disease described by standard incidence infection mechanism modeled by SO is easier to control by limiting the motility $d_S$ of susceptible population compared to the mass-action infection mechanism modeled by MO. Nevertheless, the disease subject to mass action infection mechanism can be eradicated to a larger extent (region) if the motility $d_I$ of infected individuals is restricted.

Now consider models MW and SW that have the same linear recruitment but different infection mechanisms. From Table \ref{t2}, Fig.\ref{fig2} and Fig.\ref{fig3}, we see that the infectious disease cannot be eliminated at all by restricting $d_S$ for either models due to the source term of susceptible population, while restricting $d_I$ can eliminate the disease partially for both models but standard incidence infection mechanism seems to be more efficient than the mass-action one.

Let us also consider the effect of linear recruitment on the same infection mechanism; that is, we compare model SO with SW, and MO with MW. Recall that restricting the motility of susceptible population ($d_S$ is small) yields the extinction of disease subject to standard incidence infection mechanism in SO, but this strategy fails  for SW with linear recruitment subject to the same infection mechanism. Similar results hold between models MO and MW, but only with small total population.
When $d_I$ is small, the infectious disease modeled by SO and SW is eradicated/persistent at the same region but the latter has a larger total mass, whereas the infectious disease modeled by MW is less condensed compared to its counterpart MO.
Thus, if $d_S$ is small, whichever the infection mechanism is, a varying total population tends to enhance the persistence of disease, while this enhancement induced by standard incidence infection mechanics is not as strong as mass action one does. Nevertheless, for small $d_I$, the disease subject to mass-action infection mechanism modelled by MO and MW seems to be less endemic since the infected population is more concentrated (see Fig.\ref{fig2}). 

If the environment is modified to include a moderate-risk region (see a graph in Fig.\ref{fig0}(b)), then we see that for small $d_I$, the disease modelled by SO, SW and MO can be eradicated precisely at low-risk and moderate-risk sites (see Fig.\ref{fig4}(b)). This exhibits quite different behavior than that of model MW for which the infected disease may also persist in low-risk or moderate-risk sites but also be eradicated in part of high-risk sites (see Fig.\ref{fig4}(a)).
Compared to the profiles shown in Fig.\ref{fig2} for the case of small $d_I$ without moderate-risk site, from the standing point of disease control, this essentially implies that at least for model MO it is perhaps not a sound strategy to create a moderate-risk domain in the environment and restrict the motility of infected population at the same time.

We also would like to mention that due to the conservative property of the total population, the steady state problem of SO can be reduced to a single local elliptic equation while that of MO can be reduced to a single nonlocal elliptic equation. Hence, this property makes the corresponding system easier to attack, compared to the case of varying total population. Moreover, it is exactly because of this property that one can consider the asymptotic profiles of the positive solution for small $d_I$ and $d_I/d_S \to d_0$ for some $d_0$, as in \cite{Peng, Wu-Zou}. This seems to be a rather challenging task for the steady state of models MW and SW due to lack of appropriate a prior estimates.

Finally, it is perhaps worth mentioning that one can also consider the effects of large motility rate of susceptible or infected population, as in \cite{LiBo2, LiPengWang, Peng}. In fact, one can easily follow the arguments there and conclude that when the motility of the susceptible population tends to infinity, the density of the susceptibles becomes positive and homogeneous and the density of the infected is also positive but inhomogeneous throughout the habitat; similar result holds if the movement rate of the infected population becomes large. Since these results are essentially the same as before and they indicate that large diffusion rate of the susceptibles or infected does not help to eradicate the disease, we do not present these results in this paper.

\end{document}